\definecolor{dblue}{rgb}{0.,0.,0.8}
\numberwithin{equation}{section}
\theoremstyle{plain}
\newtheorem{theorem}{Theorem}[section]
\newtheorem{lemma}[theorem]{Lemma}
\newtheorem{corollary}[theorem]{Corollary}
\theoremstyle{remark}
\newtheorem{remark}{Remark}[section]
\newtheorem{example}{Example}[section]
\newcommand{\bo}[1]{\mathbf{#1}} 
\newcommand{\lld}{\llparenthesis}
\newcommand{\rrd}{\rrparenthesis}
\newcommand{\calA}{\mathcal{A}}
\newcommand{\calD}{\mathcal{D}}
\newcommand{\calE}{\mathcal{E}}
\newcommand{\calI}{\mathcal{I}}
\newcommand{\calQ}{\mathcal{Q}}
\newcommand{\calP}{\mathcal{P}}
\newcommand{\calR}{\mathcal{R}}
\newcommand{\calM}{\mathcal{M}}
\newcommand{\calN}{\mathcal{N}}
\newcommand{\bD}{\bm{\calD}}
\newcommand{\bQ}{\widetilde{\bo Q}}
\newcommand{\eps}{\varepsilon}
\newcommand{\Id}{\mathrm{Id}}
\newcommand{\dd}{\mathrm{d}}
\newcommand{\abs}[1]{\lvert#1\rvert} 
\newcommand{\tends}{\rightarrow}
\newcommand{\norm}[1]{\lVert#1\rVert}
\newcommand{\p}{\partial}
\newcommand{\rhoa}{\rho_{\bo A}}
\newcommand{\rhouz}{\rho_{U}}
\newcommand{\lmax}{\lambda_{\mathrm{max}}}
\newcommand{\lmin}{\lambda_{\mathrm{min}}}
\newcommand{\gmin}{\gamma}
\newcommand{\gmax}{\Gamma}
\DeclareMathOperator{\Dim}{dim}
\DeclareMathOperator{\Diag}{diag}
\newcommand{\pair}[2]{\langle #1,#2 \rangle}
\newcommand{\R}{\mathbb{R}}
\newcommand{\Om}{\Omega}
\newcommand{\vphi}{\varphi}
\newcommand{\Vh}{\mathbb{V}}
\newcommand{\Vt}{\mathbb{V}_{\tau}}
\newcommand{\X}{\mathbb{A}}
\newcommand{\Y}{\mathbb{S}}
\newcommand{\Cadd}{C_{\Vh}^{\mathrm{add}}}
\newcommand{\Cmult}{C_{\Vh}^{\mathrm{mult}}}
\newcommand{\Csol}{C_{\Vh}^{\mathrm{prec}}}
\title{Time-parallel iterative solvers for parabolic evolution equations\thanks{This project has received funding from the European Research Council (ERC) under the European Union's Horizon 2020 research and innovation program (grant agreement No 647134 GATIPOR).}}
\author{Martin Neum\"uller\footnotemark[2] $\,$and Iain~Smears\footnotemark[3]}
\begin{document}

\renewcommand{\thefootnote}{\fnsymbol{footnote}}
\footnotetext[2]{Institute of Computational Mathematics, Johannes Kepler University Linz, 4040 Linz, Austria (neumueller@numa.uni-linz.ac.at)}
\footnotetext[3]{Department of Mathematics, University College London, 25 Gordon Street, London WC1E 6BT, United Kingdom (i.smears@ucl.ac.uk)}
\renewcommand{\thefootnote}{\arabic{footnote}}

\maketitle

\begin{abstract}
We present original time-parallel algorithms for the solution of the implicit Euler discretization of general linear parabolic evolution equations with time-dependent self-adjoint spatial operators.
Motivated by the inf-sup theory of parabolic problems, we
show that the standard nonsymmetric time-global system can be equivalently reformulated as an original symmetric saddle-point system that remains inf-sup stable with respect to the same natural parabolic norms.
We then propose and analyse an efficient and readily implementable parallel-in-time preconditioner to be used with an inexact Uzawa method.
The proposed preconditioner is non-intrusive and easy to implement in practice, and also features the key theoretical advantages of robust spectral bounds, leading to convergence rates that are independent of the number of time-steps, final time, or spatial mesh sizes, and also a theoretical parallel complexity that grows only logarithmically with respect to the number of time-steps.
Numerical experiments with large-scale parallel computations show the effectiveness of the method, along with its good weak and strong scaling properties.
\end{abstract}
\medskip

{\noindent\bfseries Key words: }Parabolic partial differential equations, parallel algorithms, preconditioners, convergence, parallel complexity.
\smallskip

\tableofcontents

\section{Introduction}\label{sec:intro}
The usual approach to the numerical solution of parabolic partial differential equations (PDE) involves the sequential solution of discrete systems obtained by a time-stepping scheme. 
In many cases, a large number of time-steps might be required, which can lead to long computation times.
The large number of cores in present-day high-performance computers enables the use of time-parallel algorithms as an alternative to the sequential approach. By treating many time-steps simultaneously and in parallel, these specialized algorithms aim to compute the numerical solution to a desired accuracy in a shorter computational time than can be achieved by the sequential approach.

Starting with Nievergelt~\cite{Nievergelt1964} in 1964, a wide variety of time-parallel algorithms have been proposed~\cite{BotchevVanderVorst2001,ChristliebHaynesOng2012,EmmettMinion2012,Falgoutetal2014,GanderGuttel2013,GiladiKeller2002,HoangJaffreJaphet2013,HortonVandewalleWorley1995,McDonaldWathen2016,Womble1990}, see also the review~\cite{Gander2015}.
One of the central questions that any time-parallel algorithm must address is how to efficiently propagate information about the solution across the whole time interval.
In the parareal method~\cite{Bal2005,GanderVandewalle2007,LionsMadayTurinici2001,MadayTurinici2005}, this is done by iterating between the sequential solution of problems on a coarse temporal grid with parallel solves on the fine temporal grid.
Instead of a single coarse temporal grid, time and space-time multigrid methods~\cite{Falgoutetal2014,NeumullerGander2016,Hackbusch1984,Horton1992,HortonVandewalle1995} use a hierarchy of coarser grids in both space and time. 
For linear problems, many of these algorithms can be seen as convergent iterative solvers for a large time-global nonsymmetric linear system. Recall that the available theory for the design and analysis of iterative methods for general nonsymmetric systems is currently rather more limited than for their symmetric counterparts \cite{MalekStrakos2015,Wathen2015}. Therefore, the study of iterative methods for parabolic problems appears significant within the wider context of the solution of large nonsymmetric linear systems.

Recently, two independent works~\cite{Andreev2016,Smears2016} proposed to treat the nonsymmetry of linear systems coming from parabolic problems using approaches based on inf-sup theory.
In particular, first step in the approach from~\cite{Smears2016}, which is developed further here, is to build a left-preconditioner that is based on the mapping from trial functions to their optimal test functions in the analysis of the inf-sup condition. 
The left-preconditioner then leads to equivalent symmetric reformulations that are stable in the same norms and spaces (or their discrete analogues) as those that appear naturally in the analysis of well-posedness of the problem.
Note that the stability of these reformulations distinguishes this approach from classical ones such as forming the normal equations.
Convergent iterative solvers can then be built out of preconditioners for the symmetric positive definite systems associated to the norms and spaces appearing in the inf-sup theory.

This approach to preconditioning and solving the system is conceptually distinct from those in many previous works in several ways, one of which is that it explicitly features preconditioners that handle the appropriate spatial dual-norm on the time derivative.
For illustration, consider momentarily the heat equation $\p_t u - \Delta u = f$ in $\Om\times(0,T)$, where $f\in L^2(0,T;H^{-1}(\Om))$, with, for simplicity, homogeneous Cauchy--Dirichlet conditions at the boundary and initial time. 
Then the solution space $S$ is the space of functions in $L^2(0,T;H^1_0(\Om))\cap H^1(0,T;H^{-1}(\Om))$ that vanish at $t=0$ (see e.g.\ \cite{Wloka1987}).
Furthermore, we have the following identity
\begin{equation}\label{eq:continuous_inf_sup}
\begin{aligned}
\norm{u}_S = \sup_{v\in L^2(0,T;H^1_0(\Om))\setminus\{0\}} \frac{B(u,v)}{\norm{v}_A} &&&\forall\,u\in S,
\end{aligned}
\end{equation}
where $B(u,v)\coloneqq \int_{0}^T \pair{\p_t u}{v} + (\nabla u,\nabla v)_{\Om}\,\dd t$ is the bilinear form for a weak formulation of the problem, where $\norm{u}_S^2 \coloneqq \int_0^T \norm{\p_t u }_{H^{-1}(\Om)}^2 + \norm{\nabla u}_{\Omega}^2 \,\dd t + \norm{u(T)}_\Om^2$ is the norm on $S$,  and where $\norm{v}_A^2 \coloneqq  \int_0^T \norm{\nabla v}_\Omega^2 \,\dd t$, with $\norm{\cdot}_{\Omega}$ denoting the $L^2$-norm over $\Omega$ (for proof, see for instance~\cite{ErnSmearsVohralik2016}).
For each trial function $u\in S$, there is an optimal choice of test function $v\in L^2(0,T;H^1_0(\Om))$ that achieves the supremum in~\eqref{eq:continuous_inf_sup}; in the discrete setting, this mapping between trial and optimal test functions leads to a left-preconditioner that symmetrizes the system in a stable way. The complete solution algorithm combines this with additional preconditioners tied to the discrete versions of the norms $\norm{\cdot}_S$ and $\norm{\cdot}_A$, i.e.\ that handle explicitly the norm on the time derivative (see Sections~\ref{sec:formulation} and \ref{sec:schur_prec} below for details).

We note that the inf-sup theory for parabolic problems has previously found application in other contexts, such as a priori error analysis \cite{TantardiniVeeser2016}, a posteriori analysis~\cite{ErnSmearsVohralik2016}, and reduced-basis methods~\cite{UrbanPatera2014}. We also refer the reader to the textbooks~\cite{ErnGuermond2004,Schwab1998} for an  introduction to the inf-sup theorem for general linear equations in Banach spaces, and its application to parabolic problems. 

In this work, we present some original time-parallel algorithms for the solution of the implicit Euler discretization of general parabolic evolution equations with self-adjoint spatial operators.
The first contribution is to show that the discrete systems admit a similar inf-sup analysis to \eqref{eq:continuous_inf_sup}, which allows us to find equivalent symmetric systems that are stable with respect to the discrete counterparts of the norms $\norm{\cdot}_S$ and $\norm{\cdot}_A$ above.
In particular, we obtain an equivalent symmetric saddle-point formulation that is well-suited for preconditioned iterative solvers, such as the inexact Uzawa method \cite{BramblePasciakVassilev1997,Zulehner2002} or the preconditioned MINRES method~\cite{PaigeSaunders1975}.
The second contribution is to propose robust and efficient time-parallel preconditioners for these linear systems, resulting fast convergence of the iterative solvers.
Specifically, the preconditioner for the first variable of the saddle-point system is block-diagonal with respect to the time-steps, and the Schur complement preconditioner for the second variable is also block-diagonalized under the Discrete Sine Transform (DST) in time. 
Thus, after application of the DST, the time-parallelism is essentially trivial, and significantly simplifies the treatment of the dual norm of the time derivatives.
The DST can be implemented through parallel Fast Fourier Transforms (FFT), which have a low parallel complexity and are often relatively cheap in practice compared to the spatial solvers. The transformations of the temporal basis via the DST constitutes the main mechanism for exchange of information over time.

The main features of the algorithm can be summarized as follows.

\emph{Convergence theory.}
Applying the proposed time-parallel preconditioners to standard solvers, such as the inexact Uzawa method, leads to robust convergence rates that depend only on the efficiency of the spatial preconditioners and on the quasi-uniformity of the problem. As a result, the convergence of the algorithm is independent of the number of time-steps, the spatial mesh size, and the final time. The main step in the analysis is a proof of the robust spectral equivalence of the Schur complement and its preconditioner through the DST.

\emph{Parallel complexity.}
Since we are primarily interested here in time-parallelism, we study the dependence of the parallel complexity on $N$ the number of time-steps.
We show that each iteration of the method has a parallel complexity of $O(\log N)$ when sufficiently many processors are available.
To put this result in context, note that the standard parareal method using the implicit Euler method in the coarse and fine solvers achieves at best a parallel complexity of order $O(\sqrt{N})$, as shown in \cite{BalMaday2002}.
This result should be considered in light of the logarithmic-order lower bounds on the optimal achievable parallel complexity, see~\cite{HortonVandewalleWorley1995,Worley1991}.

\emph{Treatment of time-dependent spatial operators and non-uniform time-steps.}
The spatial operators can be time-dependent and the time-step lengths may vary, under the quasi-uniformity condition~\eqref{eq:tauA_assumption} below. 
It is worth noting that the time-dependence of the operators precludes an analysis based on reducing the problem to the scalar ODE case through spatial eigenvector decompositions, which is a common approach in the literature on time-parallel algorithms.

\emph{Simplicity of implementation.}
On the practical side, the parallel implementation of the proposed method has the advantage of being non-intrusive with respect to the spatial solvers and preconditioners, so black-box existing spatial solvers can be re-used.
This means that significant spatial parallelism can be straightforwardly included, see for instance the numerical experiments below.
The parallelization in time requires only parallel implementations of the one-dimensional FFT, which are available in libraries such as FFTW3 \cite{FFTW05}.

This article is organized as follows. The discrete parabolic problem is presented in Section~\ref{sec:formulation}, where we propose an equivalent stable symmetric saddle-point formulation that is the starting point for our approach.
We then consider an inexact Uzawa method as a representative iterative solver in Section~\ref{sec:uzawa}, along with a convergence theorem that motivates the need for a spectrally equivalent Schur complement preconditioner.
Section~\ref{sec:schur_prec} then details the construction of parallelisable preconditioner along with the key spectral bounds.
This is followed by the bounds on parallel complexity in Section~\ref{sec:parallel_complexity}.
The analysis of the spectral bounds is taken up in Sections~\ref{sec:infsup} and \ref{sec:schur}. Finally, we present numerical experiments with large scale parallel computations in Section~\ref{sec:numexp}, before presenting our conclusions.

\section{Discrete parabolic problem}\label{sec:formulation}
For $T>0$, consider a partition of the time interval $(0,T)$ into disjoint time-step intervals $I_n =(t_{n-1},t_n)$, with $0=t_0 \leq t_{n-1} < t_n \leq t_N = T$ for each $1\leq n \leq N$. Let $\tau_n \coloneqq t_n - t_{n-1}$ denote the time-step lengths for each $n=1,\dots, N$. 
For a given finite dimensional space $\Vh$, let $M$ and $A_n$, $n=1,\dots,N$, be symmetric positive definite matrices on $\Vh$.

Consider the discretization of an abstract parabolic evolution equation by the implicit Euler method 
\begin{equation}\label{eq:algo_euler}
\begin{aligned}
M (u_n - u_{n-1} ) + \tau_n A_n u_n = \tau_n f_n, 
\end{aligned}
\end{equation}
where $u_n\in \Vh$ for each $n=1,\dots,N$. At the first time-step $n=1$, the term $u_0$ is replaced by some given initial datum $u_I\in \Vh$.
In applications to second-order parabolic PDEs, the matrices $M$ and $A_n$ typically represent the mass and stiffness matrices obtained by some spatial discretization method.

The matrices $M$ and $A_n$, $n=1,\dots,N$, induce the inner-products $(\cdot,\cdot)_M$ and $(\cdot,\cdot)_{A_n}$ and the norms $\norm{\cdot}_{M}$ and $\norm{\cdot}_{A_n}$ on $\Vh$.
To simplify the notation, we shall identify functions in $\Vh$ with their vector representations, so for instance we shall write $\norm{v}_M^2 = v^\top M \,v$ for all $v\in\Vh$.
We assume that there exists a symmetric positive definite matrix $A$ along with positive constants $\tau>0$ and $\alpha\geq 1$, such that
\begin{equation}\label{eq:tauA_assumption}
\begin{aligned}
\frac{1}{\alpha}\, \tau A \leq \tau_n A_n \leq \alpha\, \tau A &&& \forall\,n=1,\dots,N,
\end{aligned}
\end{equation}
where the inequalities are in the sense of the partial ordering of positive semi-definite symmetric matrices (i.e.\ $A\leq B$ if and only if $B-A$ is positive semi-definite).
In other words, we assume that the matrices $\tau_n A_n$, $n=1,\dots,N$, are uniformly spectrally equivalent to the matrix $\tau A$; this amounts to a non-degeneracy and quasi-uniformity assumption.
For instance, this assumption is guaranteed if the temporal grid is quasi-uniform and all matrices $A_n$ are spectrally equivalent to $A$; however~\eqref{eq:tauA_assumption} is a somewhat weaker assumption in general.
We stress that \eqref{eq:tauA_assumption} is not a CFL-type restriction on the time-step sizes. 
We also remark that the constant $\tau$ and matrix $A$ will be needed in the algorithm below, and, ideally, they should be chosen to make the constant $\alpha$ as close to $1$ as possible. In practice, some simple choices would be to select $\tau$ and $A$ among the time-steps $\{\tau_n\}$ and operators $A_n$ given by the problem, or to consider an average.

\subsection{Equivalent reformulations} First, we express~\eqref{eq:algo_euler} in time-global form by gathering the solution values into the vector $\bo u = [u_1,\dots, u_N] \in \Vh^N$, with $\Vh^N = \Vh\times \dots\times \Vh$, which leads to the nonsymmetric system
\begin{equation}\label{eq:algo_B_system}
\begin{aligned}
\bo B  \bo u = \bo f, &&& \bo B \coloneqq \bo K + \bo A,
\end{aligned}
\end{equation}
where $\bo A \coloneqq \Diag \{\tau_n A_n \}_{n=1}^N$ is the block-diagonal matrix with entries $\tau_n A_n$ along the diagonal, and where $\bo K \coloneqq K\otimes M$ with $\otimes$ is the Kronecker product, with $K\in \R^{N\times N}$ defined by
\begin{equation}\label{eq:algo_K_def}
K \coloneqq \left(
\begin{smallmatrix}
 1 &  & &  \\
 -1 & 1 & & \\
 & -1 &1 &   \\
 &  & & \ddots 
\end{smallmatrix}
\right).
\end{equation}
Furthermore, the right-hand side in \eqref{eq:algo_B_system} is given by $\bo f = [ \tau_1 f_1 + M u_I, \tau_2 f_2, \dots, \tau_N f_N]$.

The starting point is to consider two equivalent reformulations of \eqref{eq:algo_B_system}.
Define the matrix $\bo P \coloneqq \bo A^{-1} \bo K + \bo I$, with $\bo I = \Id_{N\times\Dim\Vh}$ the identity matrix of dimension $N\times \Dim\Vh$.
Then, we define the left-preconditioned matrix $\bo S \coloneqq \bo P^{\top} \bo B$.
An easy calculation shows that the system~\eqref{eq:algo_B_system} is equivalent to solving the left-preconditioned system
\begin{equation}\label{eq:algo_S_system}
\begin{aligned}
\bo S \bo u = \bo g, && \bo S = \bo K^\top \bo A^{-1} \bo K + \bo K + \bo K^\top  + \bo A, && \bo g \coloneqq \bo P^{\top} \bo f.
\end{aligned}
\end{equation}
It will be shown in Section~\ref{sec:infsup} below that the matrix $\bo S$ is symmetric and positive definite, and that the left-preconditioner $\bo P$ represents the optimal choice of test function for the inf-sup stability analysis of $\bo B$. 
Specifically, we will show in Theorems~\ref{thm:discrete_inf_sup} and~\ref{thm:symm} below that 
\begin{equation}\label{eq:infsup_matrix}
\begin{aligned}
\norm{\bo u}_{\bo S} = \sup_{\bo v\in \Vh^N\setminus \{0\} } \frac{\bo v^{\top} \bo B \bo u}{ \norm{\bo v}_{\bo A} }  &&& \forall\, \bo u \in \Vh^N,
\end{aligned}
\end{equation}
where the supremum is achieved by taking $\bo v = \bo P \bo u$, and where $\norm{\cdot}_{\bo A}$ and $\norm{\cdot}_{\bo S}$ are the norms associated with $\bo A$ and $\bo S$ defined above.
Furthermore, the matrix $\bo S$ represents the discrete analogue of the continuous norm~$\norm{\cdot}_S$ appearing in~\eqref{eq:continuous_inf_sup}, as will be seen from the Galerkin interpretation of the implicit Euler method.

It is possible, in theory, to apply standard iterative solvers, such as a preconditioned conjugate gradient (CG) method, to~\eqref{eq:algo_S_system}.
However, this would explicitly require the action of $\bo A^{-1}$ at each iteration, which can be expensive to compute.
To overcome this issue, we introduce the auxiliary variable $\bo p \in \Vh^N$ defined by the equation $ \bo A \bo p =  \bo K \bo u - \bo f$. 
Then, a simple calculation shows that the matrix $\bo S$ in~\eqref{eq:algo_S_system} is the Schur complement of the symmetric indefinite system
\begin{equation}\label{eq:mixed_formulation}
\begin{aligned}
\bm{\calA}\, \bm{u} = \bm{g},
&&&
\bm{\calA} \coloneqq 
\begin{bmatrix}
\bo A & - \bo K \vspace{1ex} \\ 
- \bo K^\top & - \left(\bo K + \bo K^{\top} + \bo A\right)
\end{bmatrix},
&&&
\bm{u} \coloneqq
\begin{bmatrix}
  \bo p  \vspace{1ex}\\  \bo u
\end{bmatrix},
&&&
\bm{g} \coloneqq
\begin{bmatrix}
  -\bo{f} \vspace{1ex}\\ -\bo f
\end{bmatrix}.
\end{aligned}
\end{equation}
The solution $\bo u \in \Vh^N$ of \eqref{eq:algo_B_system} is the second component of the solution $\bm{u}\in \Vh^N\times \Vh^N$ of \eqref{eq:mixed_formulation}, and we see that $\bo p = - \bo u$ from~\eqref{eq:algo_B_system}.
The matrix $\bm{\calA}$ is of saddle-point type, with dimension $2\times\Dim \Vh \times N$ and is block-sparse, since $\bo A$ is block-diagonal, $\bo K$ is block lower-triangular and bidiagonal, and $\bo K+\bo K^\top$ is block-tridiagonal.

We propose to compute the solution of the problem by applying preconditioned iterative solvers to the system~\eqref{eq:mixed_formulation}, such as the inexact Uzawa method.
The advantage of \eqref{eq:mixed_formulation} over \eqref{eq:algo_S_system} is that it allows for iterative solvers that merely approximate the action of $\bo A^{-1}$ with preconditioners.
The advantage of \eqref{eq:mixed_formulation} over the original formulation~\eqref{eq:algo_B_system} is that it is symmetric, whilst remaining inf-sup stable with respect to the same norms as \eqref{eq:algo_B_system}.
Indeed, it is possible to show, using for instance the bounds in~\cite{PestanaWathen2015}, that, for any $\bm{u}\in \Vh^N\times \Vh^N$,
\begin{equation}\label{eq:inf_sup_mixed}
\frac{1}{2}\left(\sqrt{5}-1\right) \norm{\bm{u}}_{*} \leq \sup_{\bm{v}\in\Vh^N\times\Vh^N\setminus\{0\}} \frac{ \bm{v}^\top \bm{\calA}\, \bm{u} }{\norm{\bm{v}}_{*}} \leq \frac{1}{2}\left(\sqrt{5}+1\right) \norm{\bm{u}}_*.
\end{equation}
where the norm $\norm{\cdot}_*$ on $\Vh^N\times \Vh^N$ is defined by
\begin{equation}\label{eq:mixed_norm}
\begin{aligned}
  \norm{ \bm{v} }_{*}^2 \coloneqq \norm{\bo q}_{\bo A}^2 + \norm{\bo v}_{\bo S}^2 &&& \forall\, \bm{v} = \left[ \bo q, \bo v \right]\in \Vh^{N}\times\Vh^N.
\end{aligned}
\end{equation}
We then see that the norm on the second variable in~\eqref{eq:mixed_norm} is the same as in the left-hand side of \eqref{eq:infsup_matrix}. 

\section{Inexact Uzawa method}\label{sec:uzawa}
There is a range of iterative methods for solving saddle-point systems such as \eqref{eq:mixed_formulation}. We consider here the inexact Uzawa method
\begin{equation}\label{eq:uzawa}
\begin{aligned}
\bo{p}_{j+1} &= \bo{p}_{j} +   \widetilde{\bo A}^{-1} \left(\bo K \bo{u}_{j} - \bo{A} \bo{p}_j - \bo f  \right), \\
\bo{u}_{j+1} &= \bo{u}_{j} + \omega \widetilde{\bo H}^{-1} \left( \bo f - \bo{K}^\top \bo{p}_{j+1} - \left[\bo K + \bo{K}^\top + \bo A \right] \bo{u}_j\right),  
\end{aligned}
\end{equation}
where $\widetilde{\bo A}$ and $\widetilde{\bo H}$ are respectively  preconditioners for $\bo A$ and $\bo S$, where $\omega>0$ is a damping parameter, and where $\bm{u}_0 = [\bo{p}_0, \bo{u}_0]$ is an initial guess.
In practice, it is most natural to choose $\widetilde{\bo A}$ to be of block-diagonal form
\begin{equation}\label{eq:Aapprox_block_form}
  \widetilde{\bo A} \coloneqq \Diag \{ \tau_n \widetilde{A}_n \}_{n=1}^N,
\end{equation}
where, for each $n=1,\dots,N$, the matrix $\widetilde{A}_n$ is symmetric positive definite. The application of $\widetilde{\bo A}^{-1}$ is then trivially parallel with respect to the time-steps.
In practice, the approximation~$\widetilde{A}_n$ is usually defined implicitly in terms of its inverse $\widetilde{A}_n^{-1}$ that represents the action of a standard solver for spatial problems, such as a small number of multigrid V-cycles for example. 
For the analysis, we assume that there exists $\rhoa$ with $0\leq \rhoa <1$, such that each $\widetilde{A}_n$ is a convergent approximation of $A_n$, i.e.\ 
\begin{equation}\label{eq:Aapprox_convergent}
\begin{aligned}
\norm{I - \widetilde{A}_n^{-1} A_n }_{\widetilde{A}_n} \leq \rhoa &&& \forall \, n=1,\dots, N.
\end{aligned}
\end{equation}

The following theorem gives sufficient conditions for convergence of the inexact Uzawa method. Let the norm $\norm{\cdot}_{\bD}$ be defined on $\Vh^N\times\Vh^N$ by
\begin{equation}\label{eq:Dnorm_def}
\norm{\bm{v}}_{\bD}^2 \coloneqq \omega \rhoa \norm{\bo q}_{\widetilde{\bo A}}^2 + \norm{\bo v}_{\widetilde{\bo H}}^2.  
\end{equation}

\begin{theorem}[Convergence]\label{thm:uzawa}
Suppose that \eqref{eq:Aapprox_convergent} holds and that $\widetilde{\bo H}$ is symmetric positive definite. Let $\lmax>0$ (respectively $\lmin>0$) be an upper bound on the maximum eigenvalue (respectively lower bound on the minimum eigenvalue) of $\widetilde{\bo H}^{-1} \bo S$.
Let the sequence of iterates $\{\bm u_j\}_{j=0}^{\infty}$ be defined by~\eqref{eq:uzawa}, and let the quantities $\sigma_{-}$ and $\sigma_{+}$ be defined by
\begin{subequations}\label{eq:sigma_pm_def}
\begin{align}
\sigma_{-} &\coloneqq \frac{1}{2} \left[ (1-\rhoa)( 1-\omega\lmin ) + \sqrt{ 4\rhoa  + (1-\rhoa)^2(1-\omega \lmin)^2 } \right], \label{eq:sigma_min}
\\
\sigma_{+} &\coloneqq  \frac{1}{2} \left[ 
(1+\rhoa)(1+\omega \lambda_{\max})-2 + \sqrt{ 4\rhoa + \left[(1+\rhoa)(1+\omega \lambda_{\max})-2 \right]^2 }  \right] . \label{eq:sigma_plus}
\end{align}
\end{subequations}
Let $\rhouz \coloneqq \max\{\sigma_{-},\sigma_{+}\}$. 
Then we have
\begin{equation}\label{eq:uzawa_contraction}
\begin{aligned}
\norm{\bm{u}-\bm{u}_{j+1}}_{\bD} \leq \rhouz \norm{ \bm{u}-\bm{u}_j }_{\bD}  &&& \forall\,j\geq 0.
\end{aligned}
\end{equation}
If the damping parameter $\omega>0$ is chosen such that
\begin{equation}\label{eq:damping_condition}
  \omega\, \lmax  < 2\,  \frac{1 - \rhoa}{1+\rhoa} ,
\end{equation}
then $\rhouz <1 $ and the inexact Uzawa method is convergent.
\end{theorem}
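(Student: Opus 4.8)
The plan is to analyze the error propagation of the iteration \eqref{eq:uzawa} and show that, measured in the $\norm{\cdot}_{\bD}$-norm, it contracts with factor $\rhouz$. First I would set $\bo e^p_j \coloneqq \bo p - \bo p_j$ and $\bo e^u_j \coloneqq \bo u - \bo u_j$, and subtract the exact relations $\bo A \bo p = \bo K \bo u - \bo f$ and $\bo K^\top \bo p + (\bo K + \bo K^\top + \bo A)\bo u = \bo f$ from the two lines of \eqref{eq:uzawa}. This should give
\begin{equation*}
\begin{aligned}
\bo e^p_{j+1} &= (\bo I - \widetilde{\bo A}^{-1}\bo A)\,\bo e^p_j + \widetilde{\bo A}^{-1}\bo K\,\bo e^u_j,\\
\bo e^u_{j+1} &= \bo e^u_j - \omega\widetilde{\bo H}^{-1}\!\left(\bo K^\top \bo e^p_{j+1} + (\bo K + \bo K^\top + \bo A)\,\bo e^u_j\right).
\end{aligned}
\end{equation*}
Substituting the first line into the second and using $\bo S = \bo K^\top \bo A^{-1}\bo K + \bo K + \bo K^\top + \bo A$, I would rearrange the $\bo e^u$-update so that the "exact Uzawa" part $(\bo I - \omega\widetilde{\bo H}^{-1}\bo S)\bo e^u_j$ appears, with the remaining terms involving the defect operator $\bo R \coloneqq \bo I - \widetilde{\bo A}^{-1}\bo A$ applied to $\bo e^p_j$ and to $\widetilde{\bo A}^{-1}\bo K \bo e^u_j$. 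The key structural point is that $\bo K^\top \bo A^{-1}$ appears naturally and that $\norm{\bo R}_{\widetilde{\bo A}} \leq \rhoa$ by \eqref{eq:Aapprox_convergent}, since $\widetilde{\bo A}$ and the $\widetilde{A}_n$ are block-diagonal and the bound \eqref{eq:Aapprox_convergent} is stated in exactly the matching norm.

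Next I would estimate $\norm{\bm e_{j+1}}_{\bD}^2 = \omega\rhoa\norm{\bo e^p_{j+1}}_{\widetilde{\bo A}}^2 + \norm{\bo e^u_{j+1}}_{\widetilde{\bo H}}^2$ term by term. For the first summand, from the $\bo e^p$-update and the triangle inequality, $\norm{\bo e^p_{j+1}}_{\widetilde{\bo A}} \leq \rhoa \norm{\bo e^p_j}_{\widetilde{\bo A}} + \norm{\widetilde{\bo A}^{-1}\bo K \bo e^u_j}_{\widetilde{\bo A}}$; the quantity $\norm{\widetilde{\bo A}^{-1}\bo K\bo e^u_j}_{\widetilde{\bo A}} = \norm{\bo K\bo e^u_j}_{\widetilde{\bo A}^{-1}}$, which I would bound by $\norm{\bo e^u_j}_{\bo S}$ using the identity $\bo S = \bo K^\top \bo A^{-1}\bo K + \dots$ combined with the spectral equivalence between $\bo A$ and $\widetilde{\bo A}$ implicit in \eqref{eq:Aapprox_convergent} — more precisely, $\norm{\bo w}_{\widetilde{\bo A}^{-1}}^2 = \norm{\bo w}_{\bo A^{-1}}^2 + \bo w^\top(\widetilde{\bo A}^{-1} - \bo A^{-1})\bo w$ and one controls the correction via $\rhoa$. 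For the $\bo e^u$ summand I would use that $\lmin, \lmax$ bound the spectrum of $\widetilde{\bo H}^{-1}\bo S$, so $\norm{(\bo I - \omega\widetilde{\bo H}^{-1}\bo S)\bo e^u_j}_{\widetilde{\bo H}} \leq \max\{|1-\omega\lmin|,|1-\omega\lmax|\}\norm{\bo e^u_j}_{\widetilde{\bo H}}$, and then absorb the defect terms. Collecting everything into a $2\times2$ matrix acting on the vector $(\norm{\bo e^p_j}_{\widetilde{\bo A}}, \norm{\bo e^u_j}_{\widetilde{\bo H}})$ (suitably scaled by $\sqrt{\omega\rhoa}$), I would identify $\sigma_{-}$ and $\sigma_{+}$ as the spectral radii of the two extremal such matrices — the expressions in \eqref{eq:sigma_pm_def}, with their characteristic $\tfrac12[a + \sqrt{4\rhoa + a^2}]$ shape, are exactly the larger eigenvalues of $2\times2$ matrices of the form $\left(\begin{smallmatrix} \rhoa & \sqrt{\rhoa} \\ \sqrt{\rhoa} & a\end{smallmatrix}\right)$ type, which confirms that the scaling $\omega\rhoa$ in \eqref{eq:Dnorm_def} is chosen precisely to symmetrize the coupling.

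I expect the main obstacle to be the careful bookkeeping that turns the coupled error recursion into a genuine contraction in a single fixed norm, rather than merely a bound of the form "error at step $j+1$ $\lesssim$ errors at steps $j$ and $j+1$." The sticking point is handling the implicit appearance of $\bo e^p_{j+1}$ inside the $\bo e^u_{j+1}$ update: one must substitute the already-derived bound for $\norm{\bo e^p_{j+1}}_{\widetilde{\bo A}}$ and verify that all the cross terms combine so that the resulting $2\times2$ amplification matrix has off-diagonal entries that, after the $\sqrt{\omega\rhoa}$ rescaling, are symmetric and bounded by $\sqrt{\rhoa}$ (times $\sqrt{\omega}$-type factors), so that its larger eigenvalue is exactly $\sigma_{\pm}$. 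Once \eqref{eq:uzawa_contraction} is established with $\rhouz = \max\{\sigma_{-},\sigma_{+}\}$, the final claim is a short computation: under \eqref{eq:damping_condition} one has $1 - \omega\lmin > 0$ and $(1+\rhoa)(1+\omega\lmax) - 2 < 2\rhoa/(1+\rhoa) \cdot(1+\rhoa) \cdot \dots$, and one checks directly from \eqref{eq:sigma_min}--\eqref{eq:sigma_plus} that both $\sigma_{-} < 1$ and $\sigma_{+} < 1$, since $\tfrac12[a + \sqrt{4\rhoa + a^2}] < 1$ is equivalent to $\rhoa + a < 1$, i.e.\ to $a < 1 - \rhoa$, which holds for $a = (1-\rhoa)(1-\omega\lmin)$ trivially and for $a = (1+\rhoa)(1+\omega\lmax) - 2$ precisely when \eqref{eq:damping_condition} holds.
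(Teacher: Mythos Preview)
Your plan diverges from the paper's proof in a way that creates a genuine gap. The paper does \emph{not} bound the two error components by separate triangle inequalities and then pass to a scalar $2\times2$ amplification matrix. Instead, following Zulehner, it factorizes the full iteration matrix as $\bm{\calM}=-\bm{\calR}\,\bm{\calN}\,\bm{\calQ}$ with
\[
\bm{\calN}=\begin{bmatrix}\widetilde{\bo A}&-\bo K\\-\bo K^\top&\widetilde{\bo S}-\bQ\end{bmatrix},\qquad
\bm{\calR}=\begin{bmatrix}\widetilde{\bo A}^{-1}&\\&\bQ^{-1}\end{bmatrix},\qquad
\bm{\calQ}=\begin{bmatrix}\widetilde{\bo A}^{-1}\bo A-\bo I&\\&\bo I\end{bmatrix},
\]
where $\bQ=\tfrac1\omega\widetilde{\bo H}$ and $\widetilde{\bo S}=\bo K^\top\widetilde{\bo A}^{-1}\bo K+\bo K+\bo K^\top+\bo A$. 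One then checks $\bm{\calR}^\top\bD\bm{\calR}=\omega\bm{\calE}^{-1}$ and $\bm{\calQ}^\top\bm{\calE}\bm{\calQ}\le\tfrac1\omega\bD$ (this is exactly where \eqref{eq:Aapprox_convergent} enters), giving $\norm{\bm{\calM}}_{\bD}\le\norm{\bm{\calE}^{-1}\bm{\calN}}_{\bm{\calE}}$. The constants $\sigma_\pm$ then come from analysing the \emph{symmetric generalized eigenvalue problem} $\bm{\calN}\bm v=\mu\,\bm{\calE}\bm v$: eliminating the first block variable reduces each eigenvalue $\mu$ to a scalar quadratic inequality, and $\sigma_-$ and $\sigma_+$ bound the negative and positive eigenvalues respectively.

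Your triangle-inequality route cannot reproduce these exact constants. Concretely: from $\norm{\bo e^p_{j+1}}_{\widetilde{\bo A}}\le\rhoa\norm{\bo e^p_j}_{\widetilde{\bo A}}+\norm{\widetilde{\bo A}^{-1}\bo K\bo e^u_j}_{\widetilde{\bo A}}$ the $(1,1)$ entry of your scalar amplification matrix is $\rhoa$, yet the formula $\tfrac12[a+\sqrt{4\rhoa+a^2}]$ you identify as $\sigma_\pm$ is the larger root of $\lambda^2-a\lambda-\rhoa=0$, i.e.\ the spectral radius of a matrix with trace $a$ and determinant $-\rhoa$ --- for instance $\left(\begin{smallmatrix}0&\sqrt{\rhoa}\\\sqrt{\rhoa}&a\end{smallmatrix}\right)$, \emph{not} $\left(\begin{smallmatrix}\rhoa&\sqrt{\rhoa}\\\sqrt{\rhoa}&a\end{smallmatrix}\right)$. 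The latter has larger eigenvalue $\tfrac12[(\rhoa+a)+\sqrt{(\rhoa-a)^2+4\rhoa}]$, which is strictly bigger. So the norm-by-norm estimates, which are not sharp because they discard the relative orientation of $\bo R\bo e^p_j$ and $\widetilde{\bo A}^{-1}\bo K\bo e^u_j$, will at best yield a contraction factor strictly larger than $\rhouz$. The factorization-plus-eigenvalue argument is what buys the sharp constants here. Your final paragraph on the equivalence $\tfrac12[a+\sqrt{4\rhoa+a^2}]<1\Leftrightarrow a<1-\rhoa$, and hence the derivation of \eqref{eq:damping_condition}, is correct and matches the paper.
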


The proof of Theorem~\ref{thm:uzawa} is given in Section~\ref{sec:uzawa_proof} below.
Thus, it is seen that the convergence rate of the inexact Uzawa method can be bounded by $\rhouz$ which depends only on $\rhoa$, $\omega$ and on the spectral equivalence between $\widetilde{\bo H}$ and $\bo S$.
If $\widetilde{\bo H}$ is spectrally equivalent to $\bo S$, i.e.\ with uniform bounds on $\lmin$ and $\lmax$ independent of the discretization parameters, e.g.\ the number of time-steps, then the convergence will be robust with respect to these parameters.
We will construct in Section~\ref{sec:schur_prec} below a spectrally equivalent preconditioner $\widetilde{\bo H}$ that verifies the assumptions in Theorem~\ref{thm:uzawa}, provided only that \eqref{eq:tauA_assumption} and \eqref{eq:Aapprox_convergent} hold, and that suitable preconditioners for certain associated spatial matrices are available.

\begin{remark}[Condition on the damping parameter]\label{rem:damping}
The condition~\eqref{eq:damping_condition} is essentially equivalent to the condition given in \cite[Thm~4.3]{Zulehner2002}, although written in a different form. This assumption is also rather natural, since if $\rhoa$ approaches zero, then the inexact Uzawa method approaches a preconditioned Richardson iteration for $\bo S$ and~\eqref{eq:damping_condition} approaches the standard condition that $\omega < 2/\lmax$ for guaranteeing the contraction of the iteration matrix $\bo I-\omega \widetilde{\bo H}^{-1} \bo S$.
\end{remark}

\begin{remark}[Norm of the principal variable]\label{rem:max_norm}
Since we are primarily interested in the second variable in the system~\eqref{eq:mixed_formulation}, and since the preconditioners $\widetilde{\bo H}$ considered here will be shown to be spectrally equivalent to $\bo S$, we see that the associated norm for the second variable in $\norm{\cdot}_{\bm{\calD}}$ is equivalent to $\norm{\cdot}_{\bo S}$, which is natural for the problem, owing to~\eqref{eq:infsup_matrix}.
In the literature, various other norms have been used in the analysis of parallel algorithms for parabolic equations; a popular choice appears to be the max-norm $\max_{1\leq n \leq N}\norm{v_n}_{M}$, see for instance~\cite{GanderVandewalle2007}.
It turns out that the norm $\norm{\cdot}_{\bo S}$ considered here is stronger than the max-norm; indeed, Corollary~\ref{cor:max_norm_bound} below shows that, for every $\bo v \in \Vh^N$, with $\bo v=[v_1,\dots,v_N]$, we have the bound $\max_{1\leq n \leq N}\norm{ v_n }_M \leq \norm{\bo v}_{\bo S}$, without any unknown constant. However, in general, this bound has no robust converse, i.e. the norm $\norm{\cdot}_{\bo S}$ is strictly stronger than the max-norm. In this sense, Theorem~\ref{thm:uzawa} gives stronger guarantees than robust convergence in the max-norm.
\end{remark}

\begin{remark}[Norm of the auxiliary variable]
We see in~\eqref{eq:Dnorm_def} that the parameters $\rhoa$ and $\omega$ appear in association with the norm for the auxiliary variable $\bo q$ in the definition of the norm~$\norm{\cdot}_{\bm{\calD}}$. This can be explained as follows: if one of $\rhoa$ or $\omega$ is very small, then it is clear that the error $\bo u - \bo u_{j+1}$ for the principal variable is not significantly influenced by the previous error  $\bo p-\bo p_j$ in the auxiliary variable; this is reflected in the bound~\eqref{eq:uzawa_contraction}.
In the case where $\rhoa$ approaches zero, then the inexact Uzawa method approaches the preconditioned Richardson iteration for $\bo S$, and the error $\bo u - \bo u_{j+1}$  becomes in the limit independent of $\bo p-\bo p_j$. For this reason, we see that the presence of the parameters $\rhoa$ and $\omega$ in norm~$\norm{\cdot}_{\bm{\calD}}$ is rather natural for the analysis of the inexact Uzawa method.
\end{remark}

\begin{remark}[Other iterative methods]
Although we consider here the inexact Uzawa method for solving the system~\eqref{eq:mixed_formulation}, this is by no means the only possible choice. For instance, one alternative is the MINRES algorithm \cite{PaigeSaunders1975}. 
Given a symmetric positive definite preconditioner, MINRES minimizes a preconditioned residual norm over a Krylov subspace. For example, a suitable yet simple choice of preconditioner for MINRES here would be the block diagonal matrix $
\left[\begin{smallmatrix}
  \widetilde{\bo A} &\\
  & \widetilde{\bo H}
\end{smallmatrix}
\right]$.
Then, provided that $\widetilde{\bo H}$ is spectrally equivalent to $\bo S$, we may use the spectral bounds in~\cite{PestanaWathen2015} and convergence theory in \cite{Greenbaum1997} to show the robust convergence of MINRES. Notice that unlike the inexact Uzawa method above, MINRES does not require a suitably chosen damping parameter $\omega$ to have guaranteed convergence. Moreover, the condition~\eqref{eq:Aapprox_convergent} can be relaxed to the weaker assumption of spectral equivalence.
\end{remark}

\subsection{Proof of Theorem~\ref{thm:uzawa}.}\label{sec:uzawa_proof}

The proof essentially follows the approach in~\cite{Zulehner2002}, which gives sufficient conditions for convergence of inexact Uzawa methods in the context of general saddle-point problems with zero lower diagonal block.   However, the saddle-point matrix $\bm{\calA}$ defined in~\eqref{eq:mixed_formulation} has nonzero lower diagonal block, so, strictly speaking, we must check that the approach in \cite{Zulehner2002} can be extended to cover the present situation. Therefore, in this subsection, we adapt the main steps from \cite{Zulehner2002} for the sake of completeness. To remain brief, we do not attempt to give as general a treatment as the one in \cite{Zulehner2002}.

Let $\bm{\mathcal{M}}$ denote the iteration matrix of the inexact Uzawa method, i.e. $\bm{e}_{j+1} = \bm{\mathcal{M}} \bm{e}_j$ for each $j\geq 0$, where $\bm{e}_j\coloneqq \bm{u}-\bm{u}_j$. Furthermore, let $\bQ \coloneqq \frac{1}{\omega}\widetilde{\bo H}$ denote the rescaled Schur complement preconditioner. A simple calculation shows that
\begin{equation}
\bm{\mathcal{M}} = \begin{bmatrix}
  \bo I - \widetilde{\bo A}^{-1} \bo A & \widetilde{\bo A}^{-1} \bo K \\
  - \bQ^{-1} \bo K^\top \left(\bo I - \widetilde{\bo A}^{-1} \bo A \right) & \bo I - \bQ^{-1} \widetilde{\bo S}
\end{bmatrix},
\end{equation}
where $\widetilde{\bo S} \coloneqq \bo K^\top \widetilde{\bo A}^{-1} \bo K + \bo K+ \bo K^\top + \bo A$, and where $\bo I$ denotes the identity matrix on $\Vh^N$. Therefore, we can obtain~\eqref{eq:uzawa_contraction} by showing that $\norm{\bm{\mathcal{M}}}_{\bD} \leq \rhouz$, where $\norm{\bm{\mathcal{M}}}_{\bD}$ denotes the operator norm of $\mathcal{M}$ with respect to the norm $\bD$ from~\eqref{eq:Dnorm_def}.

\begin{lemma}\label{lem:uzawa_eigenvalue_problem}
Assume that \eqref{eq:Aapprox_convergent} holds, that $\omega>0$, and that $\widetilde{\bo H}$ is symmetric positive definite.
Then we have the bound $\norm{\bm{\calM}}_{\bD} \leq \max_{\mu \in \Sigma} \abs{\mu}$ where $\Sigma\subset \R$ denotes the set of eigenvalues of the generalized symmetric eigenvalue problem:
\begin{equation}\label{eq:uzawa_eigenvalue_problem}
\begin{aligned}
\bm{\calN} \bm{v} = \mu \, \bm{\calE} \bm{v},
&&& \bm{\calN} \coloneqq   \begin{bmatrix}
    \widetilde{\bo A} & - \bo K \\ - \bo K^\top & \widetilde{\bo S} - \bQ
  \end{bmatrix},
&&& \bm{\calE} \coloneqq \begin{bmatrix}
    \tfrac{1}{\rhoa} \widetilde{\bo A} & \\ & \bQ
  \end{bmatrix}
\end{aligned}
\end{equation}
where $\widetilde{\bo S} \coloneqq \bo K^\top \widetilde{\bo A}^{-1} \bo K + \bo K+ \bo K^\top + \bo A$.
\end{lemma}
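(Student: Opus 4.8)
The plan is to bound the operator norm $\norm{\bm{\calM}}_{\bD}$ by relating it to the spectral radius of a symmetric eigenvalue problem. First I would observe that the iteration matrix $\bm{\calM}$ can be factored in a way that exposes its structure with respect to the block preconditioners. Specifically, writing $\bm{\calM}$ out in the form given above, the $\bD$-norm of $\bm{\calM}$ equals the $\bD$-norm of the symmetrized operator, and standard arguments (as in \cite{Zulehner2002}) reduce estimating $\norm{\bm{\calM}}_{\bD}$ to estimating the spectral radius of $\bm{\calD}^{-1} \bm{\calM}^\top \bm{\calD}\, \bm{\calM}$ or, more conveniently, to a generalized eigenvalue problem of the form $\bm{\calM}^\top \bm{\calD}\, \bm{\calM}\, \bm v = \mu^2 \bm{\calD}\, \bm v$. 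The goal is to show that the eigenvalues $\mu$ appearing here coincide with (or are bounded by) those of the problem \eqref{eq:uzawa_eigenvalue_problem}.

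The key algebraic step is to recognize that $\bm{\calM} = \bm{\calE}^{-1}\bm{\calN}$ up to an appropriate congruence. Concretely, with $\bQ = \tfrac1\omega \widetilde{\bo H}$, one has $\bm{\calE} = \Diag\{\tfrac{1}{\rhoa}\widetilde{\bo A},\, \bQ\}$, and I would check by direct computation that $\bm{\calM}$ is similar (via a symmetric positive definite matrix, e.g.\ $\bm{\calE}^{1/2}$ or a related transformation) to a symmetric matrix whose spectrum is exactly $\Sigma$; more precisely, that $\bm{\calD}$-self-adjointness of a suitable conjugate of $\bm{\calM}$ holds, so that $\norm{\bm{\calM}}_{\bD}$ equals its spectral radius in that inner product. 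The identity to verify is that $\bm{\calD}\,\bm{\calM}$ is symmetric, or that $\bm{\calM} = \Id - \bm{\calE}^{-1}(\bm{\calE} - \bm{\calN})\cdot(\text{something})$; unwinding the definitions of $\widetilde{\bo A}^{-1}\bo A$, $\widetilde{\bo S}$, and the off-diagonal blocks, this should collapse to $\bm{\calM} = \bm{\calE}^{-1}\bm{\calN}$ after accounting for the factor $\rhoa$ in the $(1,1)$ block of $\bm{\calE}$ (which is exactly where the $\omega\rhoa$ weighting in $\norm{\cdot}_{\bD}$ enters). Then $\norm{\bm{\calM}}_{\bD} = \max_{\mu\in\Sigma}\abs{\mu}$ follows because $\bm{\calE}^{-1}\bm{\calN}$ is self-adjoint with respect to the $\bm{\calE}$-inner product (as both $\bm{\calN}$ and $\bm{\calE}$ are symmetric and $\bm{\calE}$ is positive definite), and the $\bm{\calE}$-norm coincides with $\norm{\cdot}_{\bD}$ up to the identification $\omega\bQ = \widetilde{\bo H}$.

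I expect the main obstacle to be the careful bookkeeping in establishing the factorization $\bm{\calM} = \bm{\calE}^{-1}\bm{\calN}$ — in particular, verifying that the off-diagonal block $-\bQ^{-1}\bo K^\top(\bo I - \widetilde{\bo A}^{-1}\bo A)$ in $\bm{\calM}$ is consistent with the clean off-diagonal $-\bo K^\top$ appearing in $\bm{\calN}$ once one multiplies through by $\bm{\calE}$. This requires writing $\bo I - \widetilde{\bo A}^{-1}\bo A = \widetilde{\bo A}^{-1}(\widetilde{\bo A} - \bo A)$ and tracking how this interacts with the $\rhoa$-scaling; it is precisely the place where the assumption \eqref{eq:Aapprox_convergent} (that $\widetilde{\bo A} - \bo A$ is controlled by $\rhoa\widetilde{\bo A}$ in the relevant sense) is used to guarantee that $\bm{\calE}$ genuinely dominates the relevant terms and that the eigenvalue problem is well-posed with real spectrum. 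A secondary point is confirming that $\bm{\calE}$ being positive definite (which needs $\rhoa > 0$; the case $\rhoa = 0$ is handled separately as a limit, consistent with the remarks in the paper) so that the generalized eigenvalue problem \eqref{eq:uzawa_eigenvalue_problem} is a standard symmetric-definite pencil with real eigenvalues $\Sigma\subset\R$. The remainder is routine: once the similarity is established, $\norm{\bm{\calM}}_{\bD} \le \max_{\mu\in\Sigma}\abs{\mu}$ is immediate, with equality in fact holding.
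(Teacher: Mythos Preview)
There is a genuine gap: the identity $\bm{\calM} = \bm{\calE}^{-1}\bm{\calN}$ that you propose to verify is simply false, and no ``appropriate congruence'' rescues it. Computing $\bm{\calE}^{-1}\bm{\calN}$ gives $(1,1)$-block equal to $\rhoa\,\bo I$, whereas the $(1,1)$-block of $\bm{\calM}$ is $\bo I - \widetilde{\bo A}^{-1}\bo A$; these do not agree, nor are they related by a similarity that preserves the $\bD$-norm. Likewise, $\bD\,\bm{\calM}$ is \emph{not} symmetric: its $(1,2)$-block is $\omega\rhoa\,\bo K$ while the transpose of its $(2,1)$-block is $-\omega(\bo I - \bo A\widetilde{\bo A}^{-1})\bo K$. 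So $\bm{\calM}$ is not self-adjoint in either the $\bD$- or $\bm{\calE}$-inner product, and your conclusion that equality $\norm{\bm{\calM}}_{\bD} = \max_{\mu\in\Sigma}\abs{\mu}$ holds is incorrect; only the inequality $\leq$ is claimed and only the inequality is true.

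The missing idea is a \emph{three-factor} decomposition $\bm{\calM} = -\,\bm{\calR}\,\bm{\calN}\,\bm{\calQ}$ with block-diagonal outer factors
\[
\bm{\calR} = \begin{bmatrix}\widetilde{\bo A}^{-1} & \\ & \bQ^{-1}\end{bmatrix},\qquad
\bm{\calQ} = \begin{bmatrix}\widetilde{\bo A}^{-1}\bo A - \bo I & \\ & \bo I\end{bmatrix},
\]
where $\bm{\calR}$ and $\bm{\calQ}$ are \emph{not} inverses of each other. One then checks the two identities $\omega\,\bm{\calE}^{-1} = \bm{\calR}^\top \bD\,\bm{\calR}$ and $\bm{\calQ}^\top\bm{\calE}\,\bm{\calQ} \leq \tfrac{1}{\omega}\bD$; the latter is an \emph{inequality}, and this is precisely where assumption~\eqref{eq:Aapprox_convergent} enters, via $(\widetilde{\bo A}^{-1}\bo A - \bo I)^\top\widetilde{\bo A}\,(\widetilde{\bo A}^{-1}\bo A - \bo I) \leq \rhoa^2\,\widetilde{\bo A}$. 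Chaining these gives
\[
\norm{\bm{\calM}}_{\bD}^2 = \sup_{\bm v\neq 0}\frac{\bm v^\top\bm{\calQ}^\top\bm{\calN}\,\bm{\calE}^{-1}\bm{\calN}\,\bm{\calQ}\,\bm v}{\bm v^\top(\tfrac1\omega\bD)\bm v}
\leq \sup_{\bm w\neq 0}\frac{\bm w^\top\bm{\calN}\,\bm{\calE}^{-1}\bm{\calN}\,\bm w}{\bm w^\top\bm{\calE}\,\bm w}
= \norm{\bm{\calE}^{-1}\bm{\calN}}_{\bm{\calE}}^2,
\]
after which your final step (self-adjointness of $\bm{\calE}^{-1}\bm{\calN}$ in the $\bm{\calE}$-inner product) applies. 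The point you flagged as ``the main obstacle'' --- reconciling the off-diagonal block $-\bQ^{-1}\bo K^\top(\bo I-\widetilde{\bo A}^{-1}\bo A)$ with the clean $-\bo K^\top$ in $\bm{\calN}$ --- is resolved not by an identity but by absorbing the factor $\bo I - \widetilde{\bo A}^{-1}\bo A$ into $\bm{\calQ}$ and then using the contraction bound.
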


\begin{proof}
Let $\bD \coloneqq  \left[\begin{smallmatrix} \omega  \rhoa \widetilde{\bo A} & \\ & \widetilde{\bo H} \end{smallmatrix}\right]$ denote the matrix inducing the norm~$\norm{\cdot}_{\bD}$ in \eqref{eq:Dnorm_def}.
A simple calculation shows that the matrix $\bm{\calM}$ can be factorized as $\bm{\calM} = - \bm{\calR} \bm{\calN} \bm{\calQ}$, where $\bm{\calN}$ is as in \eqref{eq:uzawa_eigenvalue_problem}, and $\bm{\calR} \coloneqq \left[ \begin{smallmatrix} \widetilde{\bo A}^{-1} & \\ & \bQ^{-1} \end{smallmatrix} \right]$,  $\bm{\calQ} \coloneqq \left[ \begin{smallmatrix} \widetilde{\bo A}^{-1}\bo A -\bo I & \\ & \bo I \end{smallmatrix} \right]$.
It is furthermore easy to check that $\omega \bm{\calE}^{-1} =  \bm{\calR}^\top \bD \bm{\calR}$, and that $\bm{\calQ}^\top \bm{\calE} \bm{\calQ} \leq \frac{1}{\omega} \bD$ as a consequence of~\eqref{eq:Aapprox_convergent}. Therefore, we find that
\begin{equation*}
\begin{split}
\norm{\bm{\calM}}_{\bD}^2 &= \sup_{ \bm{v}\in \Vh^N\times\Vh^N\setminus\{0\} } 
\frac{ \bm{v}^\top \bm{\calQ}^\top \bm{\calN} \bm{\calE}^{-1} \bm{\calN} \bm{\calQ} \bm{v} }{ \bm{v}^\top (\frac{1}{\omega}\bD) \bm{v} }\\
&\leq \sup_{\bm{w}\in \Vh^N\times\Vh^N\setminus\{0\}} \frac{ \bm{w}^\top \bm{\calN} \bm{\calE}^{-1} \bm{\calN} \bm{w} }{ \bm{w}^\top \bm{\calE} \bm{w}} 
= \norm{\bm{\calE}^{-1} \bm{\calN}}_{\bm{\calE}}^2,
\end{split}
\end{equation*}
where we obtain the inequality in second line above by substituting $\bm{w}=\bm{\calQ} \bm{v}$ and using the bound $\bm{\calQ}^\top \bm{\calE}\, \bm{\calQ} \leq \frac{1}{\omega} \bD$ given above.
Since the matrix $\bm{\calE}^{-1} \bm{\calN}$ is symmetric with respect to the $\bm{\calE}$-inner product, we see that $\norm{\bm{\calE}^{-1} \bm{\calN}}_{\bm{\calE}} = \max_{\mu \in \Sigma} \abs{\mu} $ where $\Sigma$ denotes the set of eigenvalues in \eqref{eq:uzawa_eigenvalue_problem}. This implies that $\norm{\bm{\calM}}_{\bD} \leq \max_{\mu \in \Sigma} \abs{\mu}$.
\end{proof}

The next step in the proof of Theorem~\ref{thm:uzawa} is to bound the eigenvalues $\mu\in \Sigma$ defined by \eqref{eq:uzawa_eigenvalue_problem}.

\begin{lemma}\label{lem:uzawa_eigenvalue_bounds}
Assume that \eqref{eq:Aapprox_convergent} holds, that $\omega>0$, and that $\widetilde{\bo H}$ is symmetric positive definite. Let $\lmax>0$ (respectively $\lmin>0$) be an upper bound on the maximum eigenvalue (respectively lower bound on the minimum eigenvalue) of $\widetilde{\bo H}^{-1} \bo S$.
If $\mu \in \Sigma$ is a negative eigenvalue of \eqref{eq:uzawa_eigenvalue_problem}, then $\abs{\mu}\leq \sigma_{-}$ where $\sigma_{-}$ is defined~\eqref{eq:sigma_min}.
If $\mu\in \Sigma$ is a positive eigenvalue, then $\abs{\mu} \leq \sigma_{+}$ with $\sigma_{+}$ defined in~\eqref{eq:sigma_plus}. 
\end{lemma}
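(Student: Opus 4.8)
The plan is to eliminate the first block of~\eqref{eq:uzawa_eigenvalue_problem}, reducing the pencil to a scalar identity relating $\mu$ to three Rayleigh-type quantities, and then to turn this into a quadratic inequality in $\mu$ whose extreme root is exactly $\sigma_{+}$ or $-\sigma_{-}$. We assume throughout that $\rhoa>0$, so that $\bm{\calE}$ is well-defined; the case $\rhoa=0$, in which the first half-step of~\eqref{eq:uzawa} is exact and the iteration reduces to a preconditioned Richardson method for $\bo S$, is treated separately. Two preliminary observations would be recorded. First, $\bo K=K\otimes M$ with $K$ lower triangular with unit diagonal and $M$ symmetric positive definite, so $\bo K$ is invertible; writing an eigenvector of~\eqref{eq:uzawa_eigenvalue_problem} as $[\bo q,\bo v]$, the first block equation $\widetilde{\bo A}\bo q-\bo K\bo v=\tfrac{\mu}{\rhoa}\widetilde{\bo A}\bo q$ then forces $\mu\neq\rhoa$ and $\bo v\neq0$ (if $\mu=\rhoa$ then $\bo K\bo v=0$, so $\bo v=0$, and the second block gives $\bo q=0$). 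Second, as in~\cite{Zulehner2002} one checks that~\eqref{eq:Aapprox_convergent} is equivalent to $(1-\rhoa)\widetilde{\bo A}\le\bo A\le(1+\rhoa)\widetilde{\bo A}$; inverting, conjugating by $\bo K$, and writing $\bo G\coloneqq\bo K^\top\widetilde{\bo A}^{-1}\bo K$, this gives $\tfrac{1}{1+\rhoa}\bo G\le\bo K^\top\bo A^{-1}\bo K\le\tfrac{1}{1-\rhoa}\bo G$.

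Since $\mu\neq\rhoa$, the first block gives $\bo q=\tfrac{\rhoa}{\rhoa-\mu}\widetilde{\bo A}^{-1}\bo K\bo v$; inserting this into the second block and writing $\bo R\coloneqq\bo K+\bo K^\top+\bo A$ (so that $\widetilde{\bo S}=\bo G+\bo R$, while $\bo S=\bo K^\top\bo A^{-1}\bo K+\bo R$ by~\eqref{eq:algo_S_system}), one arrives at
\begin{equation*}
\Big(\bo R-\tfrac{\mu}{\rhoa-\mu}\bo G\Big)\bo v=(1+\mu)\,\bQ\,\bo v ,\qquad \bo v\neq0 .
\end{equation*}
Testing against $\bo v$ and putting $r\coloneqq\bo v^\top\bo R\bo v$, $g\coloneqq\bo v^\top\bo G\bo v$, $h\coloneqq\bo v^\top\bQ\bo v$ and $s\coloneqq\bo v^\top\bo S\bo v$ (all positive, using $\bo R\ge\bo K+\bo K^\top>0$, the invertibility of $\bo K$, and the positive definiteness of $\bQ$ and $\bo S$), this yields the scalar identity $r-\tfrac{\mu}{\rhoa-\mu}g=(1+\mu)h$. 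Moreover $g_0\coloneqq s-r=\bo v^\top\bo K^\top\bo A^{-1}\bo K\bo v>0$ satisfies $(1-\rhoa)g_0\le g\le(1+\rhoa)g_0$ by the second preliminary observation, while the hypothesis on $\widetilde{\bo H}^{-1}\bo S$, together with $\bQ=\tfrac1\omega\widetilde{\bo H}$, gives $\lmin\,\omega h\le s\le\lmax\,\omega h$.

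It then remains to split on the sign of $\mu$. If $\mu<0$, then $c\coloneqq-\tfrac{\mu}{\rhoa-\mu}\in(0,1)$, so $c(1-\rhoa)\le1$, and $r+cg\ge r+c(1-\rhoa)g_0\ge c(1-\rhoa)(r+g_0)=c(1-\rhoa)s$, where the second inequality uses $c(1-\rhoa)\le1$ and $r\ge0$; combining this with the scalar identity and $s\ge\lmin\omega h$ gives $1+\mu\ge c(1-\rhoa)\lmin\omega$, and multiplying by $\rhoa-\mu>0$ and simplifying leaves $\mu^2+(1-\rhoa)(1-\omega\lmin)\,\mu-\rhoa\le0$. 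If $\mu>\rhoa$, then $c=\tfrac{\mu}{\mu-\rhoa}>1$, so $c(1+\rhoa)>1$, and symmetrically $r+cg\le r+c(1+\rhoa)g_0\le c(1+\rhoa)(r+g_0)=c(1+\rhoa)s$; combining with $s\le\lmax\omega h$ and simplifying gives $\mu^2+\big[(1-\rhoa)-(1+\rhoa)\omega\lmax\big]\,\mu-\rhoa\le0$; a positive eigenvalue with $0<\mu\le\rhoa$ is instead controlled by the bound $\rhoa\le\sigma_{+}$, which follows from $(1+\rhoa)(1+\omega\lmax)-2\ge\rhoa-1$. Each of the two quadratics opens upward and takes the value $-\rhoa<0$ at $\mu=0$, hence is nonpositive exactly on the interval between its one negative and one positive root; using the identity $(1+\rhoa)(1+\omega\lmax)-2=(1+\rhoa)\omega\lmax-(1-\rhoa)$ and comparing with~\eqref{eq:sigma_pm_def}, the positive root of the second quadratic is $\sigma_{+}$ and the negative root of the first is $-\sigma_{-}$. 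This gives $\abs{\mu}\le\sigma_{+}$ for positive eigenvalues and $\abs{\mu}\le\sigma_{-}$ for negative ones.

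The step requiring the most care is the last: arranging the estimates so that the factors $1\pm\rhoa$, $\omega\lmin$ and $\omega\lmax$ assemble into precisely $\sigma_{\pm}$ rather than into weaker constants. The essential point is that $\bo S=\bo K^\top\bo A^{-1}\bo K+\bo R$ must be split so that the factor $1\pm\rhoa$ is spent only on the $\bo K^\top\bo A^{-1}\bo K$ term — which is exactly what the inequalities relating $r+c(1\pm\rhoa)g_0$ to $c(1\pm\rhoa)(r+g_0)$ accomplish — after which the resulting expressions must be recognised as quadratics with the prescribed roots. The remaining verifications (that $0<\mu<\rhoa$ implies $\mu\le\sigma_{+}$, that $\sigma_{\pm}>0$ so the bounds are meaningful, and the $\rhoa=0$ case) are routine.
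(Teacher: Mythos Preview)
Your proof is correct and follows essentially the same strategy as the paper: eliminate the first block of the generalized eigenvalue problem, test against $\bo v$, and reduce to a quadratic inequality in $\mu$ whose extreme root is $\sigma_{\pm}$. The only difference is cosmetic: the paper bounds $\widetilde{\bo S}=\bo G+\bo R$ directly by $(1\pm\rhoa)\omega\lambda_{\max/\min}\bQ$ and uses $g\le\tilde s$, whereas you pass through $g_0=\bo v^\top\bo K^\top\bo A^{-1}\bo K\bo v$ and $s=r+g_0$ before invoking $\lmin\omega h\le s\le\lmax\omega h$; both routes assemble the same ingredients into the same quadratics.
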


\begin{proof}
We start by showing the bound $\abs{\mu}\leq \sigma_{-}$ for any negative eigenvalue $\mu<0$. First, let $\lambda = - \mu = \abs{\mu}$; then, since $(1+ \frac{\lambda}{\rhoa})\widetilde{\bo A}$ is positive definite, we may use the eigenvalue problem~\eqref{eq:uzawa_eigenvalue_problem} to find that there exists a nonzero $\bo v\in \Vh$ such that
\[
\lambda \bQ \bo v = \bQ \bo v - \widetilde{\bo S}\bo v + \frac{\rhoa}{\rhoa+\lambda } \bo K^\top \widetilde{\bo A}^{-1} \bo K \bo v.
\]
By taking the inner product with $\bo v$ and applying the inequalities~\eqref{eq:Aapprox_convergent} and $\widetilde{\bo S} \geq (1-\rhoa) \omega \lmin \bQ$, we eventually find that $\lambda$ satisfies the inequality
\begin{equation}
\lambda^2 - \lambda (1-\rhoa)(1-\omega \lmin) \leq \rhoa,
\end{equation}
from which we deduce that $\abs{\mu}=\lambda\leq \sigma_{-}$ as claimed.
  
Next we consider the case of positive eigenvalues $\mu>0$. First note that $\rhoa<\sigma_{+}$; therefore we need only consider the case of $\mu>\rhoa$, otherwise if $\mu\leq \rhoa$ then $\mu<\sigma_{+}$ and there is nothing left to show. If $\mu>\rhoa$, then $(1-\mu/\rhoa)\bQ$ is nonsingular, and we find that there is a nonzero $\bo v\in \Vh$ such that
\[
(1+\mu) \bQ \bo v = \widetilde{\bo S} \bo v + \frac{\rhoa}{\mu-\rhoa} \bo K^\top \widetilde{\bo A}^{-1}\bo K \bo v.
\]  
We then apply the inequality $\widetilde{\bo S}\leq (1+\rhoa) \omega \lmax \bQ$ to find eventually that
\begin{equation}
\mu^2 - \mu \left[ (1+\rhoa)(1+\omega \lmax)-2 \right] \leq \rhoa,  
\end{equation}
which implies the inequality $\mu \leq \sigma_{+}$ as claimed.
\end{proof}

\noindent\emph{Proof of Theorem~\ref{thm:uzawa}.}
The combination of Lemmas~\ref{lem:uzawa_eigenvalue_problem} and \ref{lem:uzawa_eigenvalue_bounds} implies that $\norm{\bm{\calM}}
\leq \rhouz$ where $\rhouz \coloneqq \max\{ \sigma_{-}, \sigma_{+} \}$; this implies the bound~\eqref{eq:uzawa_contraction} on the sequence of iterates.
We next show that $\rhouz <1$ if \eqref{eq:damping_condition} holds. Indeed, it is easy to check that $\sigma_{-}<1$ for all $\omega \lmin>0$, and $\sigma_{+} <1 $ if and only if $\rhoa + (1+\rhoa)(1+\omega \lmax) -2 < 1$, which is equivalent to the condition in~\eqref{eq:damping_condition}.
\qed

\section{Schur complement preconditioner}\label{sec:schur_prec}
We have seen in the previous section that a convergent iterative solver for~\eqref{eq:mixed_formulation} can be obtained provided that we have at our disposal a spectrally equivalent preconditioner for the Schur complement~$\bo S$. 
In this section, we propose such a preconditioner that is well-suited for parallel computations.
We shall denote this preconditioner by $\bo H$ in the case of exact spatial solvers, and by $\widetilde{\bo H}$ in the practical case of approximate spatial solvers.
To motivate our construction, we consider the following example.

\begin{example}[Uniform time-steps with constant coefficients]
Consider momentarily the case where $\tau_n = \tau$ and $A_n = A$ for all $n=1,\dots,N$, i.e.\ where the time-steps and spatial operators are constant in time. Then the Schur complement matrix $\bo S$ is
\[
\bo S = \frac{1}{\tau} K^\top K \otimes M A^{-1} M + (K+K^\top)\otimes M + \tau \Id_N \otimes A,
\]
where $K$ is from~\eqref{eq:algo_K_def}, where $\Id_N$ is the $N\times N$ identity matrix  and where $\otimes$ denotes the Kronecker product. It is then easy to see that the matrices $K^\top K$ and $K+K^\top$ are both symmetric positive definite, since
\begin{equation}
\begin{aligned}
K^\top K = \left( \begin{smallmatrix}
 2 & -1 &  & \\
 -1  & 2 & -1 &  \\
 &   & \ddots & -1\\
 &  & -1 & 1
 \end{smallmatrix} \right),
 &&& 
 K+K^\top = \left( \begin{smallmatrix}
 2 & -1 &  & \\
 -1  & 2 & -1 &  \\
 &   & \ddots & -1\\
 &  & -1 & 2
 \end{smallmatrix} \right).
 \end{aligned}
\end{equation}
As a special case of the results of Section~\ref{sec:infsup}, we will see that $\bo S$ is spectrally equivalent to a simpler matrix where the middle term $(K+K^\top)\otimes M$ is dropped.
Since this simpler matrix involves only a sum of two Kronecker products of matrices, it can be block-diagonalized with respect to time.
The key observation is that the matrix $K^\top K$ has explicitly known (generalized) eigenvalues and eigenvectors, which are related to discrete Sine transforms (DST), which are well-suited for time-parallelism.
 This suggests using the DST in time to obtain a block-diagonal and thus time-parallel preconditioner.
\qed
\end{example}

To define the preconditioners, we use the type-III DST, represented by the matrix $\bo\Phi$ that maps $\bo u\in \Vh^N $ to $\mathbf{\hat{u}}=\bo\Phi\,\bo u\in \Vh^N$, where  $\mathbf{\hat{u}} = [\hat{u}_1,\dots, \hat{u}_N]$ is defined by
\begin{equation}\label{eq:algo_DST}
\begin{aligned}
\hat{u}_k \coloneqq \frac{2}{N} \sum_{n=1}^N \frac{1}{1+\delta_{nN}} u_n \sin\left( \frac{(2k-1) n\pi}{2N} \right), &&& k=1,\dots, N.
\end{aligned}
\end{equation}
Note that each coefficient $\hat{u}_k\in \Vh$, and that the weight-term $(1+\delta_{nN})^{-1}$ in \eqref{eq:algo_DST} is equal to $1$ for all $n<N$ and is equal to $1/2$ if $n=N$. The inverse map $\bo\Phi^{-1}$ that satisfies $\bo u = \bo\Phi^{-1} \mathbf{\hat{u}}$ is simply given by the type-II DST 
\begin{equation}\label{eq:algo_DST_inverse}
 u_n = \sum_{k=1}^N \hat{u}_k   \sin\left( \frac{(2k-1) n\pi}{2N} \right), \quad n=1,\dots, N.
\end{equation}
We stress that the actions of the transformations are with respect to the temporal components of the vectors $\bo u$ and $\bo{\hat{u}}$, since each term in the sums of \eqref{eq:algo_DST} and \eqref{eq:algo_DST_inverse} is a vector in $\Vh$. Thus the DST used here represents a change of the temporal basis.
Furthermore, the actions of the matrices $\bo\Phi$ and $\bo\Phi^{-1}$ (and their transposes) can all be implemented efficiently through recursive splittings of the summations, leading to fast implementations akin to the FFT.

The ideal preconditioner $\bo H$ is defined by
\begin{equation}\label{eq:algo_H_def}
\begin{aligned}
\bo H \coloneqq \bo\Phi^{\top} \mathbf{\bo{\hat{H}}}\, \bo\Phi, &&&
\mathbf{\bo{\hat{H}}} \coloneqq
\frac{N}{2\tau } \Diag \left\{ H_k  A^{-1} H_k \right\}_{k=1}^N,
\end{aligned}
\end{equation}
where $\tau$ and $A$ are as in \eqref{eq:tauA_assumption}, and where the matrices $H_k$ are defined by
\begin{equation}\label{eq:Hk_def}
\begin{aligned}
H_k \coloneqq \mu_k M + \tau A,  && \mu_k \coloneqq 2 \sin\left( \frac{(2k-1)\pi}{4N} \right) && \forall\, k=1,\dots,N.
\end{aligned}
\end{equation}
Notice that $\mu_k>0$ for each $1\leq k \leq N$.
The inversion of $\bo H$ can be performed by composition of $\bo{\Phi}^{-T}$, $\bo{\hat{H}}^{-1}$ and $\bo{\Phi}^{-1}$. 
As mentioned above, the actions of $\bo{\Phi}^{-T}$ and $\bo{\Phi}^{-1}$ can be computed by fast DST algorithms akin to the FFT, and the application of $\bo{\hat{H}}^{-1}$ simply requires the solution of linear systems for weighted implicit Euler steps and can be parallelized over the blocks $k=1,\dots,N$.

\subsection{Approximations}
The analysis of iterative solvers given below will allow for approximations to be made in the application of the inverse of $\bo H$. 
More precisely, we consider approximations~$\widetilde{\bo H}$ of $\bo H$ given by
\begin{equation}\label{eq:Happrox_def}
\begin{aligned}
\widetilde{\bo H}\coloneqq \bo{\Phi}^\top \hat{\bo{H}}_{\mathrm{approx}} \bo{\Phi},
&&&
\hat{\bo{H}}_{\mathrm{approx}} \coloneqq \frac{N}{2\tau }  \Diag \left\{ \widetilde{H}_k A^{-1} \widetilde{H}_k \right\}_{k=1}^N,
\end{aligned} 
\end{equation}
where, for each $k=1,\dots,N$, the symmetric positive definite matrix $\widetilde{H}_k$ represents an approximation of $H_k$.
We have in mind cases where each matrix $\widetilde{H}_k$ is obtained from a standard solver for the matrices $H_k$, for example by multigrid or domain decomposition methods. 
For the analysis, we shall assume that there exist positive constants $\gmin$ and $\gmax$ such that, for all $k=1,\dots,N$,
\begin{equation}\label{eq:Hk_spectral_equivalence}
\begin{aligned}
\gmin \, \widetilde{H}_k A^{-1} \widetilde{H}_k \leq H_k A^{-1} H_k \leq \gmax \,\widetilde{H}_k A^{-1} \widetilde{H}_k.
\end{aligned}
\end{equation}
By comparing~\eqref{eq:Happrox_def} with \eqref{eq:algo_H_def}, it is then clear that the matrices $\bo H$ and $\widetilde{\bo H}$ are spectrally equivalent, with $\gmin \widetilde{\bo H} \leq \bo H \leq \gmax \widetilde{\bo H}$.

\subsection{Spectral bounds for the Schur complement preconditioners}
Our main result is that the preconditioner $\bo H$ defined in~\eqref{eq:algo_H_def}, and its approximation $\widetilde{\bo H}$ defined in~\eqref{eq:Happrox_def}, are spectrally equivalent to $\bo S$ with known constants in the bounds.

\begin{theorem}[Spectral equivalence]\label{thm:general_spectral_equivalence}
Assume~\eqref{eq:tauA_assumption}. Then, for any number of time-steps $N$ and any symmetric positive definite matrices $M$ and $\{A_n\}_{n=1}^N$, the matrices $\bo H$ and $\bo S$ are spectrally equivalent with the following bounds
\begin{equation}\label{eq:general_SH_equiv}
\begin{aligned}
\frac{1}{2\alpha} \,\bo H \leq \bo S \leq 3 \alpha \, \bo H.
\end{aligned}
\end{equation}
Furthermore, if \eqref{eq:Hk_spectral_equivalence} also holds, then $\widetilde{\bo H}$ defined in \eqref{eq:Happrox_def} is spectrally equivalent to~$\bo S$ with the following bounds
\begin{equation}\label{eq:SHapprox_equiv}
\begin{aligned}
\frac{\gmin}{2 \alpha} \, \widetilde{\bo H} \leq \bo S \leq 3 \alpha  \gmax  \, \widetilde{\bo H}.
\end{aligned}
\end{equation}
\end{theorem}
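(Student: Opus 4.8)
The plan is to prove~\eqref{eq:general_SH_equiv} by chaining three spectral equivalences on $\Vh^N$: first $\bo S$ to the simplified matrix $\bo S_1 \coloneqq \bo K^\top\bo A^{-1}\bo K + \bo A$ obtained by discarding the symmetric term $\bo K+\bo K^\top$; then $\bo S_1$ to its constant-coefficient counterpart $\bo S_0 \coloneqq \tfrac1\tau\,(K^\top K)\otimes MA^{-1}M + \tau\,\Id_N\otimes A$; and finally $\bo S_0$ to $\bo H$. Once~\eqref{eq:general_SH_equiv} is established, the bound~\eqref{eq:SHapprox_equiv} for $\widetilde{\bo H}$ is immediate by combining it with the spectral equivalence $\gmin\widetilde{\bo H}\le\bo H\le\gmax\widetilde{\bo H}$ recorded after~\eqref{eq:Hk_spectral_equivalence}.

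The first two equivalences are elementary. For the first: $\bo K+\bo K^\top = (K+K^\top)\otimes M$ is positive semi-definite (both factors being so), whence $\bo S\ge\bo S_1$; conversely $\bo v^\top(\bo K+\bo K^\top)\bo v = 2\pair{\bo A^{1/2}\bo v}{\bo A^{-1/2}\bo K\bo v} \le \bo v^\top\bo A\bo v + \bo v^\top\bo K^\top\bo A^{-1}\bo K\bo v$ by Young's inequality, so $\bo S\le 2\bo S_1$. For the second: only the terms $\bo K^\top\bo A^{-1}\bo K$ and $\bo A$ of $\bo S_1$ involve the time-dependent operators, and since~\eqref{eq:tauA_assumption} gives $\tfrac1\alpha\tau\,\Id_N\otimes A\le\bo A\le\alpha\tau\,\Id_N\otimes A$ (hence the reciprocal bounds for $\bo A^{-1}$) and $\bo K^\top(\Id_N\otimes A^{-1})\bo K = (K^\top K)\otimes MA^{-1}M$, one obtains $\tfrac1\alpha\bo S_0\le\bo S_1\le\alpha\bo S_0$.

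The crux is the third equivalence $\bo S_0\simeq\bo H$, which I would carry out in the DST basis. The two algebraic identities to establish directly from~\eqref{eq:algo_DST}--\eqref{eq:algo_DST_inverse} are that, for the $N\times N$ factor $\Phi$ of $\bo\Phi$ and with $\mu_k$ as in~\eqref{eq:Hk_def},
\[
\Phi^{-\top}(K^\top K)\Phi^{-1} = \tfrac N2\Diag\{\mu_k^2\}_{k=1}^N, \qquad \Phi^{-\top}\Phi^{-1} = \tfrac N2\Id_N + \tfrac12\,zz^\top, \quad z \coloneqq (1,-1,\dots,(-1)^{N-1})^\top;
\]
the rank-one defect $zz^\top$ reflects that the bottom-right entry of $K^\top K$ equals $1$ rather than $2$, and in the original basis it is simply the projection onto the last time-step. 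Applying the order-preserving congruence $X\mapsto\bo\Phi^{-\top}X\bo\Phi^{-1}$ to $\bo H = \bo\Phi^\top\hat{\bo H}\bo\Phi$ returns $\hat{\bo H}$, and applied to $\bo S_0$ it returns $\bo D_0 + \tfrac\tau2\,zz^\top\otimes A$, where $\bo D_0 \coloneqq \Diag\{\tfrac{N\mu_k^2}{2\tau}MA^{-1}M + \tfrac{N\tau}2 A\}_{k=1}^N$; moreover $\hat{\bo H} = \bo D_0 + N\Diag\{\mu_k M\}_{k=1}^N$. It then remains to compare $\bo D_0 + \tfrac\tau2\,zz^\top\otimes A$ with $\bo D_0 + N\Diag\{\mu_k M\}$ in the PSD order, which I would do frequency-block by frequency-block via the matrix arithmetic--geometric inequality $a\,MA^{-1}M + b\,A \ge 2\sqrt{ab}\,M$ for $a,b>0$ (valid since $\sqrt a\,A^{-1/2}M - \sqrt b\,A^{1/2}$ has positive semi-definite Gram matrix): with $a = \tfrac{N\mu_k^2}{2\tau}$ and $b = \tfrac{N\tau}2$ this yields $\bo D_0\ge N\Diag\{\mu_k M\}$, hence $\bo D_0\le\hat{\bo H}\le 2\bo D_0$; combined with $\bo D_0\ge\tfrac{N\tau}2\,\Id_N\otimes A$ and $zz^\top\le N\Id_N$, which control the defect $\tfrac\tau2\,zz^\top\otimes A$ by $\bo D_0$, this gives two-sided bounds between the two transformed matrices, hence between $\bo S_0$ and $\bo H$. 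Undoing the congruence and chaining the three equivalences then delivers~\eqref{eq:general_SH_equiv}.

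I expect two points to need care. The first is establishing the DST identities honestly: the weighting $(1+\delta_{nN})^{-1}$ in~\eqref{eq:algo_DST} must be tracked exactly, since $\bo\Phi$ is an inverse transform but not orthogonal, and this is precisely what produces the $zz^\top$ defect. The second, and the genuine obstacle, is the constant bookkeeping: chaining the factors $2$ (middle term), $\alpha$ (coefficients) and $2$ (the DST comparison) as above gives only $\bo S\le 4\alpha\bo H$, whereas~\eqref{eq:general_SH_equiv} claims $3\alpha$. To recover the sharp constant one must avoid estimating these contributions separately; the key is that the final-time term is already controlled by $\bo S_1$ with absolute constant $1$ --- via the telescoping identity $\norm{u_N}_M^2 = 2\bo u^\top\bo K\bo u - \sum_{n=1}^N\norm{u_n-u_{n-1}}_M^2 \le \bo u^\top\bo S_1\bo u$ --- so the defect term need not pass through the lossy bound $zz^\top\le N\Id_N$, and it can be merged with the middle-term estimate. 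The lower bound $\tfrac1{2\alpha}\bo H\le\bo S$, by contrast, already drops out of the chain above.
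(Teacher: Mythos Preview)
Your chain $\bo S \leftrightarrow \bo S_1 \leftrightarrow \bo S_0 \leftrightarrow \bo H$ is sound and does produce a valid spectral equivalence; the lower bound $\tfrac{1}{2\alpha}\bo H \le \bo S$ drops out exactly as you say. The difficulty is entirely in the upper constant, and your proposed repair does not close the gap.

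The rank-one defect you isolate, $\tfrac{\tau}{2}\,zz^\top\otimes A$, corresponds in the original basis to $\tfrac{\tau}{2}\,e_Ne_N^\top\otimes A$, i.e.\ to the quadratic form $\tfrac{\tau}{2}\norm{u_N}_A^2$ --- not to $\norm{u_N}_M^2$. Your telescoping identity controls the latter, not the former, so it does not bound the defect. Bounding $\tfrac{\tau}{2}\norm{u_N}_A^2$ by the $\bo A$-part of $\bo S_0$ only recovers $\tfrac{\tau}{2}\norm{u_N}_A^2 \le \tfrac12 \bo u^\top(\tau\Id_N\otimes A)\bo u \le \tfrac12 \bo u^\top\bo S_0\bo u$, which returns you to $\bo S_0 \le 2\bo H$ and hence $\bo S \le 4\alpha\bo H$. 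I do not see a way to reach $3\alpha$ along your chain without essentially reintroducing the paper's device.

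The paper's route is genuinely different. It works in the bilinear-form picture and does \emph{not} separate the middle term $\bo K+\bo K^\top$ from the rest by a plain Young inequality. Instead it applies Young's inequality with the \emph{non-uniform} weights $\eps_n = (1+\delta_{nN})^{-1}$, which yields directly $s(v,v) \le 3\,s_D(v,v)$ where $s_D$ carries the half-weight $(1+\delta_{nN})^{-1}$ on the $A_n$-term at the final step. The point of this half-weight is precisely that the resulting constant-coefficient form $s_\dagger$ is block-diagonalized \emph{exactly} by the DST-III (your $\bo D_0$, with no $zz^\top$ defect), after which Pearson--Wathen gives $\bo S_\dagger \le \bo H$ with constant $1$. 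Thus the upper chain is $\bo S \le 3\,\bo S_D \le 3\alpha\,\bo S_\dagger \le 3\alpha\,\bo H$. In effect, the paper absorbs the last-step defect into the Young step rather than into the DST comparison, which is where the saving from $4$ to $3$ comes from. Your chain pays for the same last-step term twice: once in $\bo S \le 2\bo S_1$ and once in $\bo S_0 \le 2\bo H$.
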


The proof of Theorem~\ref{thm:general_spectral_equivalence} is the subject of Sections~\ref{sec:infsup} and~\ref{sec:schur}.
Note that the constants appearing in the bounds of Theorem~\ref{thm:general_spectral_equivalence} can thus be substituted for the constants $\lmin$ and $\lmax$ in Theorem~\ref{thm:uzawa}.

\begin{remark}[Robust convergence]
The combination of Theorems~\ref{thm:uzawa} and~\ref{thm:general_spectral_equivalence} leads to bounds on the convergence rate of the method that depend only on the the constants in the assumptions~\eqref{eq:tauA_assumption}, \eqref{eq:Aapprox_convergent}, \eqref{eq:Hk_spectral_equivalence}, and on $\omega$. 
In many practical applications, these assumptions are satisfied with uniformly bounded constants independent of parameters that determine the spatial matrices $M$ and $A_n$, such as the spatial mesh size, in which case the convergence will be robust.
\end{remark}

\section{Parallel complexity}\label{sec:parallel_complexity}
Following~\cite{HortonVandewalle1995,HortonVandewalleWorley1995}, the notion of parallel complexity is understood here as the theoretical computational cost assuming the availability of sufficiently many processors, and ignoring communication costs. It is therefore of interest as an intrinsic property of the given algorithm.
Since we are primarily interested in the time-parallelism of the algorithm, we shall focus on the dependence on the number of time-steps $N$.

In order to treat the costs related to spatial operations related to $\Vh$ in a general way, we introduce the following elementary constants.
\begin{itemize}
  \item  Let $\Cadd$ denote the cost of additions and subtractions of vectors in $\Vh$; more precisely, $\Cadd$ is the maximal cost of the operation $(v,w, c)\mapsto  v + c \, w$, where $ v$, $ w \in \Vh$ and $c\in \R$.
  \item Let $\Cmult$ denote the maximal cost of performing a matrix vector product $(L, v)\mapsto L\,v$, where $v\in \Vh$ and the matrix $L$ is one of $M$, $A$ or $A_n$, $n=1,\dots,N$.
  \item Let $\Csol$ denote the maximal cost of performing the action of the spatial preconditioners, i.e.\ the cost of the matrix-vector product $(L,v)\mapsto L\,v$ where $L$ is one of the $\widetilde{A_n}{}^{-1}$ or $\widetilde{H_k}{}^{-1}$, for $n, k =1,\dots, N$.
\end{itemize}
Given that the proposed algorithm allows the re-use of existing spatial solvers, it is clear that in many applications there can be significant spatial parallelism as well; see the experiments in Section~\ref{sec:numexp4}.
We distinguish the costs of these different operations since in practice they may be rather different; for instance, we expect that $\Cadd$ will be smaller than $\Cmult$ or $\Csol$.

We can now analyse the parallel complexity of the inexact Uzawa method using $\widetilde{\bo H}$  as defined in \eqref{eq:Happrox_def}.
For fixed constants $\rhoa$, $\omega$, $\alpha$, $\gmin$ and $\gmax$, the convergence rate of the algorithm is robust with respect to the number of time-steps. 
Therefore, for any $\eps>0$, at most $O(\log\eps^{-1})$ iterations are required to achieve a relative reduction of the residual by a tolerance $\eps$. The total cost is then based on the number of iterations required multiplied by the cost per iteration.
Each iteration of the inexact Uzawa method~\eqref{eq:uzawa} requires
\begin{itemize}
\item A fixed number of matrix vector products with $\bo K$, $\bo A$ and $\bo K^\top$, each of which has parallel complexity $O(\Cmult + \Cadd)$, independently of $N$, since each of these matrices is block-sparse.
  \item a fixed number of vector additions/subtractions on $\Vh^N\times\Vh^N$, with parallel complexity $O(\Cadd)$, independently of $N$.
  \item The application of $\widetilde{\bo A}^{-1}$, which has parallel complexity $O(\Csol)$, independently of $N$, since $\widetilde{\bo A}$ is block diagonal.
  \item The application of $\widetilde{\bo H}^{-1}$, which we discuss further below.
\end{itemize}

The matrix-vector product with $\widetilde{\bo H}{}^{-1}$ involves the application of the DST transformations related to $\bo \Phi^{-1}$ and $\bo \Phi^{-\top}$, and the application of the block diagonal matrix $\bo{\hat{H}}_{\mathrm{approx}}^{-1}$ from \eqref{eq:Happrox_def}.
It is clear that the application of~$\bo{\hat{H}}_{\mathrm{approx}}^{-1}$ has parallel complexity $O(\Cmult+\Csol)$, and is independent of $N$. 
It remains only to consider the parallel complexity of the DST. It is clear that if $N$ is, for example, an integer power of $2$, then the DST has parallel complexity of $O(\Cadd (\log N+1))$, as shown by recursive splitting of the summation in \eqref{eq:algo_DST_inverse}.
From a theoretical perspective, the same parallel complexity bound can also be achieved for general $N$, since a DST of general length~$N$ can be obtained by Bluestein's method, which involves two zero-padded discrete Fourier transforms with length equal to a power of two of same order as $N$; see \cite{Pelz1997} for further details.

In summary, the parallel complexity of each inexact Uzawa iteration is then bounded by
\begin{equation}\label{eq:complexity_estimate}
O\left ( \Cadd ( \log N + 1 ) + \Cmult + \Csol \right).  
\end{equation}
It is thus seen that each iteration has a parallel complexity that  grows at most logarithmically with $N$. 
The strong decoupling of the method between time and space can also be seen through the fact that the terms involving $\Cmult$ and $\Csol$ are independent of $N$.
Note that the terms of order $\log N$ are not necessarily dominant in actual computations, since $\Csol$ and $\Cmult$ are often significantly larger than $\Cadd$; indeed, in our experiments, the cost of the DST is significantly lower than the cost of solving the associated spatial problems, see Section~\ref{sec:numexp3} below for further details.

\begin{remark}[Comparison with the parareal method]
As mentioned above, it is known that the best parallel complexity of the parareal algorithm using the implicit Euler scheme grows as $\sqrt{N}$, see~\cite{BalMaday2002}. More precisely, in the current notation, it can shown to be of order $O((\Csol+\Cadd+\Cmult) \sqrt{N} )$. It is then seen that the difference with \eqref{eq:complexity_estimate} is not only in the order of dependence on $N$, but also in the associated constants due to the spatial problems.
\end{remark}

\section{Inf-sup stability of the implicit Euler method}\label{sec:infsup}
As mentioned in the introduction, the derivation and analysis of the proposed algorithm is strongly tied to the inf-sup stability of the problem. Therefore, we aim to offer in this section a clear conceptual understanding of the algorithm by detailing  the inf-sup analysis and explaining the physical significance of the left-preconditioner used to obtain the equivalent formulations~\eqref{eq:algo_S_system} and \eqref{eq:mixed_formulation}. Furthermore, the results given in this section prepare the ground for the proof of Theorem~\ref{thm:general_spectral_equivalence}.
In particular, we shall work with the interpretation of the implicit Euler method as the lowest-order discontinuous Galerkin time-stepping method, where the system matrix $\bo B$ admits a representation as a bilinear form on spaces of functions that are piecewise constant-in-time.
The advantage of this approach using bilinear forms is that it establishes the connection between the left-preconditioned matrix $\bo S$ and the underlying physical parabolic norm that it represents.

\subsection{Time-global variational formulation}\label{sec:euler_time_global}
Define the space
\begin{equation}\label{eq:Vt_def}
\Vt \coloneqq \oplus_{n=1}^N \calP_{0}(I_n;\Vh),
\end{equation}
where $\calP_{0}(I_n;\Vh)$ denotes the set of $\Vh$-valued functions that are constant-in-time over each time-step interval $I_n$, for $n=1,\dots,N$.
In other words, a function $v \in \Vt$ if and only if $v$ is a piecewise constant function on each time-step $I_n$, with $v|_{I_n}\in \Vh$.
A basis can be constructed for $\Vt$ by considering the tensor product between a basis of $\Vh$ and a basis for $\oplus_{n=1}^N \calP_{0}(I_n,\R)$ the space of real-valued piecewise-constant functions. Thus, a standard choice for the temporal basis of $\Vt$ is given as follows: for any $v\in\Vt$, we have $v = \sum_{n=1}^N v_n \chi_n$, where the coefficients $v_n\in \Vh$, and where $\chi_n$ is the indicator function of $I_n$, for each $n=1,\dots,N$; thus, $v_n=v|_{I_n}$ the restriction of $v$ to $I_n$.

Define the reconstruction operator $\calI \colon \Vt \tends H^1(0,T;\Vh) \cap \oplus_{n=1}^N \calP_{1}(I_n;\Vh)$ by
\begin{equation}\label{eq:time_reconstruction}
(\calI v)(t) \coloneqq v_n - \frac{t_n-t}{\tau_n}\lld v \rrd_{n-1}, \quad t\in (t_{n-1},t_n],
\end{equation}
where $\calP_{1}(I_n;\Vh)$ denotes the set of piecewise affine functions on each time-step $I_n$, and where $\lld \cdot \rrd$ denotes the jump operator defined by $\lld v \rrd_{n-1} \coloneqq v_n-v_{n-1}$, with the convention that $v_0=0$ for all $v\in \Vt$ to simplify the notation.
It follows that $\calI$ defines a linear operator on $\Vt$, and that, for any $v\in \Vt$, the function~$\calI v$ is a piecewise-affine continuous function in time, with $\calI v(t_{n}) = v_n$ for each $n=1,\dots,N$.
Thus the function $\calI v \in H^1(0,T;\Vh)$, and $\calI v$ has a weak temporal derivative $\p_t \calI v\in \Vt$ with $(\p_t \calI v)|_{I_n} = \frac{1}{\tau_n}(v_n-v_{n-1})$ for all $n=1,\dots,N$.
Additionally, the function $\calI v$ also satisfies the initial condition $\calI v (0) =0$.

The implicit Euler method~\eqref{eq:algo_euler} can then be equivalently rewritten as: find $u\in\Vt$ such that
\begin{equation}\label{eq:scheme}
\begin{aligned}
b(u,v) = \ell(v) & & &\forall\,v\in \Vt,
\end{aligned}
\end{equation}
where the bilinear form $b(\cdot,\cdot)$ and linear functional $\ell(\cdot)$ are defined by
\begin{equation}\label{eq:b_form_def}
\begin{aligned}  
b(u,v)  \coloneqq \sum_{n=1}^N \int_{I_n} (\p_t \calI u,v)_{M} + (u,v)_{A_n} \,\dd t, &&&
\ell(v) \coloneqq  (u_{I},v_1)_{M} + \sum_{n=1}^N \int_{I_n}(f_n,v)_{M} \,\dd t,
\end{aligned}
\end{equation}
where we recall that $u_{I}$ is the given initial datum.
The matrix $\bo B$ that represents the bilinear form $b(\cdot,\cdot)$ in the standard basis of $\Vt$ is given in \eqref{eq:algo_B_system}.

\subsection{Inf-sup stability}\label{subsec:infsup}
As explained above, for many parabolic equations, the natural norm for the temporal derivative is a dual norm induced by the spatial differential operator of the problem. In the discrete setting, these dual norms admit the following characterizations.
First, we introduce the dual norms $\norm{\cdot}_{M A_n^{-1} M}$, for each $n=1,\dots, N$, and $\norm{\cdot}_{M A^{-1} M}$ on $\Vh$, defined by
\begin{equation}
\begin{aligned}
\norm{w}_{M A_n^{-1} M} \coloneqq \sup_{v \in\Vh\setminus\{0\}}\frac{(w,v)_{M}}{\norm{v}_{A_n}}, &&&  \norm{w}_{M A^{-1} M} \coloneqq \sup_{v \in\Vh\setminus\{0\}}\frac{(w,v)_{M}}{\norm{v}_{A}}  &&&\forall\,w\in \Vh.
\end{aligned}
\end{equation}
Since $M$ is positive definite, and since $\Vh$ is finite dimensional, it is clear that $\norm{\cdot}_{M A^{-1} M}$ and $\norm{\cdot}_{M A^{-1}_n M}$ define norms on $\Vh$. 
The notation $\norm{\cdot}_{M A_n^{-1}M}$, is justified by the fact that this norm is induced by the inner product of the matrix $M A_n^{-1} M$, since it is straightforward to show that $\norm{w}_{MA_n^{-1}M}^2 = w^\top M A_n^{-1} M w$ and $\norm{w}_{MA^{-1}M}^2 = w^\top M A^{-1} M w$ for all $w\in \Vh$, where we again identify $w \in \Vh$ with its vector representation.
It is easy to show that the spectral equivalence~\eqref{eq:tauA_assumption} implies that $\frac{1}{\tau_n} M A^{-1}_n M$ and $\frac{1}{\tau} M A^{-1} M$ are also spectrally equivalent:
\begin{equation}\label{eq:An_dual_spectral_equivalence}
  \begin{aligned}
  \frac{1}{\alpha} M (\tau A)^{-1} M \leq M (\tau_n A_n)^{-1} M \leq \alpha\, M (\tau A)^{-1} M.
  \end{aligned}
\end{equation}
We introduce the following norms on the space $\Vt$
\begin{subequations}\label{eq:discrete_norms}
\begin{align}
\norm{v}_{\X}^2 &\coloneqq \sum_{n=1}^N \int_{I_n} \norm{v}_{A_n}^2 \,\dd t , \\
\norm{u}_{\Y}^2 &\coloneqq \sum_{n=1}^N \int_{I_n} \norm{\p_t \calI u}_{M A_n^{-1} M}^2 + \norm{u}_{A_n}^2 \,\dd t + \norm{u_N}_{M}^2 +  \sum_{n=1}^N \norm{\lld u \rrd_{n-1}}_{M}^2 ,
\end{align}
\end{subequations}
for all functions $u$ and $v$ in $\Vt$, where the jump operators $\lld \cdot \rrd$ were defined in Section~\ref{sec:euler_time_global}.

The following theorem shows that the inf-sup stability of the discrete problem holds with constant equal to one for the norms defined in \eqref{eq:discrete_norms}; this constitutes a sharp characterization of the stability of the discrete problem.

\begin{theorem}[Inf-sup stability of the Implicit Euler method]\label{thm:discrete_inf_sup}
The bilinear form $b(\cdot,\cdot)$ be defined in~\eqref{eq:b_form_def} is inf-sup stable with respect to the norms in~\eqref{eq:discrete_norms}. For any function $u\in \Vt$, we have
\begin{equation}\label{eq:discrete_inf_sup}
\begin{aligned}
\norm{u}_{\Y} = \sup_{v\in\Vt\setminus\{0\}} \frac{b(u,v)}{\norm{v}_{\X}} .
\end{aligned}
\end{equation}
Moreover, for each $u\in \Vt$, the supremum in \eqref{eq:discrete_inf_sup} is achieved by the optimal test function $v_*\in \Vt$ given by
\begin{equation}\label{eq:optimal_test_function}
\begin{aligned}
v_*|_{I_n} \coloneqq   A_n^{-1} M (\p_t \calI u)|_{I_n} + u|_{I_n}, &&& \forall\,n=1,\dots,N.
\end{aligned}
\end{equation}
\end{theorem}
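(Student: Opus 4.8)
The plan is to make the supremizer explicit and to reduce the claimed identity~\eqref{eq:discrete_inf_sup} to two elementary algebraic computations: a \emph{representation identity} that rewrites $b(u,\cdot)$ as an $\X$-inner product against a single fixed function, and a \emph{norm identity} matching the $\X$-norm of that function with $\norm{u}_{\Y}$. Throughout, fix $u\in\Vt$, set $w_n\coloneqq(\p_t\calI u)|_{I_n}=\tau_n^{-1}(u_n-u_{n-1})\in\Vh$ (with the convention $u_0=0$), let $v_*\in\Vt$ be the function defined in~\eqref{eq:optimal_test_function}, which depends linearly on the fixed $u$, and write $(q,v)_{\X}\coloneqq\sum_{n=1}^N\int_{I_n}(q,v)_{A_n}\,\dd t$ for the inner product inducing $\norm{\cdot}_{\X}$.

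First I would establish the representation identity $b(u,v)=(v_*,v)_{\X}$ for \emph{every} $v\in\Vt$. Since $w_n$ and $v|_{I_n}$ are both constant on $I_n$, both sides decompose into sums over $n$ of $\tau_n$ times the value on $I_n$, and the identity reduces on each interval to $(A_n^{-1}M w_n,v_n)_{A_n}=(w_n,v_n)_M$, which is immediate in the vector representation since $A_n A_n^{-1}=\Id$. Granted this, Cauchy--Schwarz in the $\X$-inner product gives $b(u,v)\le\norm{v_*}_{\X}\norm{v}_{\X}$ for all $v\in\Vt$, with equality attained at $v=v_*$; hence $\sup_{v\in\Vt\setminus\{0\}}b(u,v)/\norm{v}_{\X}=\norm{v_*}_{\X}$ and the supremum is realised by the optimal test function in~\eqref{eq:optimal_test_function}.

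It then remains to prove the norm identity $\norm{v_*}_{\X}=\norm{u}_{\Y}$. Expanding $\norm{v_*|_{I_n}}_{A_n}^2=\norm{w_n}_{MA_n^{-1}M}^2+2(w_n,u_n)_M+\norm{u_n}_{A_n}^2$, multiplying by $\tau_n$ and summing, the squared dual-norm and $\norm{\cdot}_{A_n}$ contributions reproduce $\sum_{n=1}^N\int_{I_n}\norm{\p_t\calI u}_{MA_n^{-1}M}^2+\norm{u}_{A_n}^2\,\dd t$. For the cross terms I would invoke the discrete summation-by-parts identity
\[
\sum_{n=1}^N 2\tau_n(w_n,u_n)_M=\sum_{n=1}^N 2(u_n-u_{n-1},u_n)_M=\norm{u_N}_M^2+\sum_{n=1}^N\norm{\lld u\rrd_{n-1}}_M^2,
\]
which follows from the polarization-type identity $2(a-b,a)_M=\norm a_M^2-\norm b_M^2+\norm{a-b}_M^2$, telescoping, and $u_0=0$. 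Adding the contributions yields exactly $\norm{u}_{\Y}^2$ from~\eqref{eq:discrete_norms}, completing the argument.

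The only genuinely delicate point is the bookkeeping in this last step: the precise combination of the jump terms $\sum_n\norm{\lld u\rrd_{n-1}}_M^2$ together with the single endpoint term $\norm{u_N}_M^2$ in the definition of $\norm{\cdot}_{\Y}$ is exactly what emerges from telescoping the cross term, and this is also the reason the inf--sup constant equals one rather than being merely two-sidedly bounded. Everything else is routine once the blockwise identity $(A_n^{-1}M w,v)_{A_n}=(w,v)_M$ is in hand.
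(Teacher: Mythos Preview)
Your proof is correct and follows essentially the same approach as the paper: both rewrite $b(u,v)$ as the $\X$-inner product $(v_*,v)_{\X}$ via the blockwise identity $(A_n^{-1}Mw,v)_{A_n}=(w,v)_M$, deduce that the supremum equals $\norm{v_*}_{\X}$, and then identify $\norm{v_*}_{\X}^2$ with $\norm{u}_{\Y}^2$ through the telescoping/polarization computation of the cross terms. The only organizational difference is that the paper factors this last computation into the separate identity~\eqref{eq:integration_identity} within the proof of Theorem~\ref{thm:symm} and then quotes $s(u,u)=\norm{u}_{\Y}^2$, whereas you carry it out directly inline.
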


\noindent The proof of theorem~\ref{thm:discrete_inf_sup} will be given below.

For the purposes of preconditioning, we can exploit the explicit knowledge of the optimal test function given in Theorem~\ref{thm:discrete_inf_sup}. We define the operator $P \colon \Vt \tends \Vt$ that maps a function $v$ to its optimal test function, namely
\begin{equation}\label{eq:P_def}
\begin{aligned}
P v|_{I_n} \coloneqq A_n^{-1} M ( \p_t \calI v )|_{I_n} + v|_{I_n} & & & \forall\, v\in \Vt.
\end{aligned}
\end{equation}
As explained above, the time derivative $\p_t \calI v$ is indeed in $\Vt$ owing to the facts that $\calI v_{h}\in H^1(0,T;\Vh)$ and $\calI v$ is piecewise affine. Therefore, we indeed have $P v \in \Vt$ for all $v\in \Vt$. It is also possible to show that $P$ is invertible on $\Vt$; hence, the numerical scheme \eqref{eq:scheme} is equivalent to finding $u\in \Vt$ such that
\begin{equation}\label{eq:scheme_left_preconditioned}
\begin{aligned}
s(u,v) = g(v), & & s (u,v) \coloneqq b (u, P v ), && g(v) \coloneqq \ell(P v) & & \forall\, v\in\Vt.
\end{aligned}
\end{equation}
In the standard basis of $\Vt$, the matrix $\bo S$ associated to the bilinear form $s(\cdot,\cdot)$ can be written as in \eqref{eq:algo_S_system}, with the left-preconditioner matrix $\bo P^{\top}$ related to the operator~$P$.
Furthermore we have the identity $\bo g = \bo P^{\top} \bo f$, where $\bo g$ and $\bo f$ are the vectors representing the actions of the linear functionals $g$ and $\ell$ on the basis functions. Therefore, the symmetric formulation of the problem~\eqref{eq:scheme_left_preconditioned} in terms of bilinear forms is equivalent to its matrix formulation~\eqref{eq:algo_S_system}.

\begin{theorem}[Symmetrization by left-preconditioner]\label{thm:symm}
The bilinear form $s(\cdot,\cdot)$ defined in~\eqref{eq:scheme_left_preconditioned} is symmetric, positive definite, and satisfies the following identity: for all $u$ and $v$ in $\Vt$,
\begin{equation}\label{eq:S_identity}
\begin{split}
s(u,v) = \sum_{n=1}^N \int_{I_n} (\p_t \calI u, \p_t \calI v)_{M A_n^{-1} M} + (u,v)_{A_n}\,\dd t + j(u,v),
\end{split}
\end{equation}
where the symmetric bilinear form $j(\cdot,\cdot)$ is defined by
\begin{equation}\label{eq:J_def}
j(u,v) \coloneqq (u_N,v_N)_{M} + \sum_{n=1}^N ( \lld u\rrd_{n-1}, \lld v\rrd_{n-1} )_{M}.
\end{equation}
The unique solution $u$ of \eqref{eq:scheme} is equivalently the unique solution of \eqref{eq:scheme_left_preconditioned}, and we have the identity $s(v,v) = \norm{v}_{\Y}^2$ for any $v \in \Vt$.
\end{theorem}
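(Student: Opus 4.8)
The plan is to prove the identity \eqref{eq:S_identity} directly by computing $s(u,v)=b(u,Pv)$ from the definitions, and then to read off symmetry, positive definiteness, and the norm identity as consequences. First I would write $b(u,Pv)=\sum_{n=1}^N\int_{I_n}(\p_t\calI u, Pv)_M + (u,Pv)_{A_n}\,\dd t$ and substitute the defining formula \eqref{eq:P_def} for $Pv$, namely $Pv|_{I_n}=A_n^{-1}M(\p_t\calI v)|_{I_n}+v|_{I_n}$. The term $(\p_t\calI u, Pv)_M$ then splits as $(\p_t\calI u, A_n^{-1}M\,\p_t\calI v)_M + (\p_t\calI u, v)_M$; the first piece is exactly $(\p_t\calI u,\p_t\calI v)_{MA_n^{-1}M}$ since $MA_n^{-1}M$ is the Gram matrix of that inner product. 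Similarly $(u,Pv)_{A_n} = (u, A_n^{-1}M\,\p_t\calI v)_{A_n} + (u,v)_{A_n} = (u, M\,\p_t\calI v) + (u,v)_{A_n}$, using $A_n A_n^{-1}=I$. So far this produces the two ``diagonal'' terms in \eqref{eq:S_identity} plus the two cross terms $\int_{I_n}(\p_t\calI u,v)_M\,\dd t$ and $\int_{I_n}(u,\p_t\calI v)_M\,\dd t$ — and I expect these cross terms to recombine, after summation over $n$ and integration by parts in time, into precisely the boundary/jump form $j(u,v)$ from \eqref{eq:J_def}.

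The main obstacle, and the technical heart of the argument, is this integration-by-parts step. Since $u$ and $v$ are piecewise constant on each $I_n$ while $\calI u$, $\calI v$ are piecewise affine and globally continuous, I would evaluate the integrals using $(\p_t\calI v)|_{I_n}=\tfrac1{\tau_n}\lld v\rrd_{n-1}$ and $v|_{I_n}=v_n$, so that $\int_{I_n}(\p_t\calI u,v)_M\,\dd t = (\lld u\rrd_{n-1}, v_n)_M$ and $\int_{I_n}(u,\p_t\calI v)_M\,\dd t = (u_n,\lld v\rrd_{n-1})_M$. Adding the two cross terms for a fixed $n$ gives $(\lld u\rrd_{n-1},v_n)_M+(u_n,\lld v\rrd_{n-1})_M$; writing $\lld u\rrd_{n-1}=u_n-u_{n-1}$ and symmetrically for $v$, a short rearrangement shows this equals $(u_n,v_n)_M-(u_{n-1},v_{n-1})_M+(\lld u\rrd_{n-1},\lld v\rrd_{n-1})_M$. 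Summing over $n=1,\dots,N$ telescopes the first two terms (with the convention $u_0=v_0=0$) to $(u_N,v_N)_M$, and the last term sums to $\sum_{n=1}^N(\lld u\rrd_{n-1},\lld v\rrd_{n-1})_M$; together these are exactly $j(u,v)$ as defined in \eqref{eq:J_def}. This establishes \eqref{eq:S_identity}.

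With \eqref{eq:S_identity} in hand the remaining claims are immediate. Symmetry of $s(\cdot,\cdot)$ follows because every term on the right-hand side is manifestly symmetric in $u$ and $v$: the inner products $(\cdot,\cdot)_{MA_n^{-1}M}$ and $(\cdot,\cdot)_{A_n}$ are symmetric (as $M$, $A_n$ are symmetric positive definite), and $j(\cdot,\cdot)$ is symmetric by inspection. For positive definiteness, setting $v=u$ gives $s(u,u)=\sum_{n=1}^N\int_{I_n}\norm{\p_t\calI u}_{MA_n^{-1}M}^2+\norm{u}_{A_n}^2\,\dd t + \norm{u_N}_M^2+\sum_{n=1}^N\norm{\lld u\rrd_{n-1}}_M^2$, which is precisely $\norm{u}_{\Y}^2$ by the definition \eqref{eq:discrete_norms}; hence $s(u,u)\geq 0$, and $s(u,u)=0$ forces $\norm{u}_{A_n}=0$ on each $I_n$, so $u=0$ since each $A_n$ is positive definite. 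This proves both positive definiteness and the asserted identity $s(v,v)=\norm{v}_{\Y}^2$. Finally, since $P$ is invertible on $\Vt$ (as noted after \eqref{eq:P_def}), the map $v\mapsto Pv$ is a bijection, so \eqref{eq:scheme} and \eqref{eq:scheme_left_preconditioned} have the same unique solution; uniqueness is guaranteed by the positive definiteness of $s(\cdot,\cdot)$ just established (equivalently by the inf-sup identity of Theorem~\ref{thm:discrete_inf_sup}). One small point to verify carefully is the convention $u_0=0$ in the jump at $n=1$, which is consistent with the definition of $\calI$ in \eqref{eq:time_reconstruction} and the initial condition $\calI u(0)=0$, so no spurious boundary contribution appears at $t=0$.
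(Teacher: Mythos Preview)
Your proposal is correct and follows essentially the same approach as the paper's proof: expand $s(u,v)=b(u,Pv)$ using the definition of $P$, identify the diagonal terms, and then show that the sum of the cross terms $\sum_n\int_{I_n}(\p_t\calI u,v)_M+(u,\p_t\calI v)_M\,\dd t$ telescopes to $j(u,v)$ via the same rearrangement into $(u_n,v_n)_M-(u_{n-1},v_{n-1})_M+(\lld u\rrd_{n-1},\lld v\rrd_{n-1})_M$. You additionally spell out the symmetry, positive definiteness, norm identity, and equivalence of solutions, which the paper leaves implicit; these are all correct and straightforward consequences of \eqref{eq:S_identity}.
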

\begin{proof}
We simply show \eqref{eq:S_identity} by calculation: for arbitrary $u$, $v\in \Vt$, $s(u,v) = b(u,P v) = \sum_{n=1}^N \int_{I_n} (\p_t \calI u, P v)_{M} + (u,P v)_{A_n}\,\dd t$.
By expanding the terms and simplifying, we obtain
\begin{equation}
s(u,v) = \sum_{n=1}^N\int_{I_n} (\p_t \calI u,\p_t \calI v)_{M A_n^{-1}M}+(u,v)_{A_n} + (\p_t \calI u,v)_{M} + (u,\p_t \calI v)_{M}\,\dd t,
\end{equation}
where we have made use of the identities $\big( \p_t\calI u,A_{n}^{-1} M \p_t\calI v\big)_M = ( \p_t\calI u ,  \p_t\calI v)_{M A_n^{-1} M}$ and $\big(u,A_n^{-1} M  \p_t\calI v\big)_{A_n}   = (u, \p_t\calI v)_M$ for any $u,\,v \in \Vt$.
To complete the proof of \eqref{eq:S_identity}, we now show that
\begin{equation}\label{eq:integration_identity}
 \sum_{n=1}^N \int_{I_n} (\p_t \calI u,v)_{M} + (u,\p_t \calI v)_{M}\,\dd t  =  j(u,v),
\end{equation}
where $j(\cdot,\cdot)$ is defined as in \eqref{eq:J_def}. Indeed, recall that $\p_t \calI u|_{I_n} = \frac{1}{\tau_n}(u_n - u_{n-1})$ for each $1\leq n \leq N$, so we obtain $\int_{I_n} (\p_t \calI u,v)_{M} + (u,\p_t \calI v)_{M}\,\dd t
= (\lld u \rrd_{n-1}, \lld v \rrd_{n-1})_{M} + (u_n,v_n)_{M}-(u_{n-1},v_{n-1})_{M}$, where we obtained the identity by adding and subtracting $(u_n-u_{n-1},v_{n-1})_{M}$.
Then, the identity in~\eqref{eq:integration_identity} is obtained by simplifying the telescoping sum.
\end{proof}

\noindent\emph{Proof of Theorem~\ref{thm:discrete_inf_sup}.}
We start by noting that $(\p_t \calI u,v)_{M} = (A_{n}^{-1} M \p_t \calI u, v)_{A_n}$ for all $v\in \Vh$. Hence, we may write $b(u,v) = \sum_{n=1}^N \int_{I_n} (P u, v)_{A_n} \,\dd t$, where the operator $P$ is defined in \eqref{eq:P_def}. This immediately implies that $\norm{P u }_{\X} = \sup_{v\in \Vt\setminus\{0\}} \frac{b(u,v)}{\norm{v}_{\X}}$ with equality achieved by the test function $v = Pu$. Hence, the identity \eqref{eq:S_identity} shows that $\norm{P u }_{\X}^2 = b(u,Pu) = s(u,u) = \norm{u}_{\Y}^2$, which implies \eqref{eq:discrete_inf_sup}.
\qed

A consequence of Theorem~\ref{thm:symm} is that the norm~$\norm{\cdot}_{\Y}$ for functions in $\Vt$ coincides with the norm $\norm{\cdot}_{\bo S}$ for $\Vh^N$, i.e.~we have $\norm{v}_{\Y} = \norm{\bo v}_{\bo S}$ for all $v\in \Vt$, where $\bo v\in \Vh^N$ is the vector representation of $v$. Furthermore, this shows the equivalence between \eqref{eq:infsup_matrix} and \eqref{eq:discrete_inf_sup}.

We now prove the max-norm bound mentioned above in Remark~\ref{rem:max_norm}.

\begin{corollary}[Max-norm bound]\label{cor:max_norm_bound}
For any $u\in \Vt$, with vector representation $\bo u \in \Vh^N$, we have
\begin{equation}\label{eq:max_norm}
  \max_{1\leq n \leq N}\norm{u_n}_M \leq \norm{u}_{\Y} = \norm{\bo u}_{\bo S}.
\end{equation}
\end{corollary}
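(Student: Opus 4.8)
The plan is to extract the bound for a fixed index $m$ from the identity $s(v,v) = \norm{v}_{\Y}^2$ in Theorem~\ref{thm:symm}, by looking only at those terms on the right-hand side of \eqref{eq:S_identity} that can be summed to telescope up to $\norm{u_m}_M^2$. Concretely, fix $1 \leq m \leq N$. I would keep the terms $\sum_{n=1}^m \int_{I_n} (\p_t \calI u, \p_t \calI u)_{M A_n^{-1} M}\,\dd t$ together with the jump contributions $\sum_{n=1}^m \norm{\lld u \rrd_{n-1}}_M^2$ from $j(u,u)$, and discard the remaining (nonnegative) terms, namely $\sum_{n=1}^N \int_{I_n}\norm{u}_{A_n}^2\,\dd t$, the final-time term $\norm{u_N}_M^2$, and the tail jumps for $n > m$. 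This already shows $\norm{u}_{\Y}^2 \geq \sum_{n=1}^m \left[\int_{I_n} \norm{\p_t \calI u}_{M A_n^{-1}M}^2\,\dd t + \norm{\lld u\rrd_{n-1}}_M^2\right]$.

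The second step is to bound $\int_{I_n}\norm{\p_t \calI u}_{M A_n^{-1}M}^2\,\dd t$ from below. Since $(\p_t\calI u)|_{I_n} = \frac{1}{\tau_n}(u_n - u_{n-1}) = \frac{1}{\tau_n}\lld u\rrd_{n-1}$ is constant on $I_n$, the integral equals $\frac{1}{\tau_n}\norm{\lld u\rrd_{n-1}}_{M A_n^{-1}M}^2$. I would then use the dual characterization: $\norm{w}_{M A_n^{-1}M} = \sup_{v\neq 0} (w,v)_M / \norm{v}_{A_n} \geq (w, w)_M/\norm{w}_{A_n}$, so $\frac{1}{\tau_n}\norm{\lld u\rrd_{n-1}}_{M A_n^{-1}M}^2 \geq \frac{1}{\tau_n}\norm{\lld u\rrd_{n-1}}_M^4 / \norm{\lld u\rrd_{n-1}}_{A_n}^2$. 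This is a bit awkward because of the $A_n$-norm in the denominator; an alternative and cleaner route is to apply the inequality $2(a,b)_M \leq t\norm{a}_{M A_n^{-1}M}^2 + t^{-1}\norm{b}_{MAM}^{2}$-type Young inequality, but in fact the slickest approach is to go back to the bilinear form directly rather than to $s(v,v)$.

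So the main line I would actually pursue: apply the inf-sup identity \eqref{eq:discrete_inf_sup} with a well-chosen test function, or equivalently use $b(u, v) \leq \norm{u}_\Y \norm{v}_\X$ (which follows from Theorem~\ref{thm:discrete_inf_sup}). Pick $v$ supported on a single interval, say $v = w\chi_m$ with $w \in \Vh$ arbitrary. Then $\norm{v}_\X^2 = \tau_m \norm{w}_{A_m}^2$, while $b(u, v) = \int_{I_m}(\p_t\calI u, w)_M + (u, w)_{A_m}\,\dd t = (u_m - u_{m-1}, w)_M + \tau_m(u_m, w)_{A_m}$. This still mixes $u_{m-1}$ and $u_m$. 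To isolate $u_m$, I would instead choose a test function supported on $I_1,\dots,I_m$ whose reconstruction picks out $u_m$ at $t_m$: take $v|_{I_n} = w \chi_n$ for $n \le m$ and $0$ otherwise; then the $\p_t\calI u$ terms telescope, $\sum_{n=1}^m (u_n - u_{n-1}, w)_M = (u_m, w)_M$ (using $u_0 = 0$), giving $b(u, v) = (u_m, w)_M + \sum_{n=1}^m \tau_n(u_n, w)_{A_n}$. This reintroduces the $A_n$ terms. The honest fix is to use the genuine optimal-test-function structure: by \eqref{eq:discrete_inf_sup}, for the specific $u$ at hand, $\norm{u}_\Y = \sup_v b(u,v)/\norm v_\X \geq b(u, v)/\norm v_\X$ for \emph{any} $v$, and I would exhibit $v$ with $\norm v_\X = 1$ and $b(u,v) \geq \norm{u_m}_M$. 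Rescaling, it suffices to find $v$ with $b(u,v) = \norm{u_m}_M^2$ and $\norm{v}_\X \leq \norm{u_m}_M$. The anticipated main obstacle is precisely this construction of $v$: one wants $v$ concentrated near $t_m$ so that the telescoping temporal-derivative pairing yields $(u_m, \cdot)_M$ while the $(u,v)_{A_n}$ contributions are controlled; a natural candidate, motivated by the continuous-level test function that extracts $u(T)$, is $v|_{I_m} = \frac{1}{\tau_m}A_m^{-1} M u_m$ and $v = 0$ elsewhere, for which $b(u,v) = \frac{1}{\tau_m}(u_m - u_{m-1}, A_m^{-1}M u_m)_M + (u_m, A_m^{-1}M u_m)_{A_m} = \frac{1}{\tau_m}\norm{u_m}_M^2 - \frac{1}{\tau_m}(u_{m-1}, u_m)_M + \norm{u_m}_M^2/\tau_m$ — still not clean. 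Given these difficulties, I expect the author's proof in fact proceeds by the term-dropping argument from $s(v,v) = \norm v_\Y^2$ combined with a careful telescoping/Cauchy--Schwarz manipulation of the jump and derivative terms; the real work is verifying that the retained nonnegative terms indeed sum to at least $\norm{u_m}_M^2$, which I would check by writing $u_m = \sum_{n=1}^m \lld u\rrd_{n-1}$ and applying Cauchy--Schwarz in $\Vh$ with weights matched to make the $\tau_n$-dependence cancel against the $\frac{1}{\tau_n}$ from the derivative terms.
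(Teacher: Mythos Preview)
Your exploration circles the right idea but never lands on it. In your third attempt you pick a test function supported on $I_1,\dots,I_m$ with a \emph{constant} spatial value $w$, and then complain that the terms $\sum_{n=1}^m \tau_n (u_n,w)_{A_n}$ are hard to control. The paper's proof makes one change to this choice that dissolves the difficulty: take $v_n = u_n$ for $n \le m$ and $v_n = 0$ for $n > m$, i.e.\ the truncation of $u$ itself. Then
\[
b(u,v) = \sum_{n=1}^m (u_n - u_{n-1}, u_n)_M + \sum_{n=1}^m \tau_n \norm{u_n}_{A_n}^2
= \tfrac{1}{2}\norm{u_m}_M^2 + \tfrac{1}{2}\sum_{n=1}^m \norm{\lld u\rrd_{n-1}}_M^2 + \norm{v}_{\X}^2,
\]
using the polarization identity $(a-b,a)_M = \tfrac12\norm{a}_M^2 - \tfrac12\norm{b}_M^2 + \tfrac12\norm{a-b}_M^2$ on the first sum and the crucial observation that the second sum is \emph{exactly} $\norm{v}_{\X}^2$. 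Now the inf-sup identity~\eqref{eq:discrete_inf_sup} together with Young's inequality gives $b(u,v) \le \norm{u}_{\Y}\norm{v}_{\X} \le \tfrac{1}{2}\norm{u}_{\Y}^2 + \tfrac{1}{2}\norm{v}_{\X}^2$, and the $\norm{v}_{\X}^2$ terms cancel, leaving $\norm{u_m}_M^2 \le \norm{u}_{\Y}^2$.

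Your final fallback --- dropping terms from $\norm{u}_{\Y}^2$ to reach $\sum_{n\le m}\bigl[\tfrac{1}{\tau_n}\norm{\lld u\rrd_{n-1}}_{MA_n^{-1}M}^2 + \norm{\lld u\rrd_{n-1}}_M^2\bigr]$ and then writing $u_m = \sum_{n\le m}\lld u\rrd_{n-1}$ with a weighted Cauchy--Schwarz --- does not obviously deliver the constant~$1$. The weights you would need to eliminate the $\tau_n$'s bring in $A_n$-norms, but the target $\norm{u_m}_M$ is an $M$-norm, and there is no robust comparison between them. As stated, that route remains incomplete; the missing ingredient is precisely the choice $v_n = u_n$ that turns the unwanted $A_n$-terms into $\norm{v}_{\X}^2$ so they can be absorbed.
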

\begin{proof}
Let $1\leq n\leq N$ be arbitrary. Define the test function $v\in \Vt$ by $v_m = u_m$ for all $m\leq n$, and $v_m =0$ for all $m>n$. Then, it is straightforward to show that $b(u,v)= \frac{1}{2}\norm{u_n}^2_M + \frac{1}{2}\sum_{m=1}^n \norm{\lld u\rrd_{m-1}}_M^2 + \norm{v}_{\X}^2$. We then use the inf-sup identity~\eqref{eq:discrete_inf_sup} and Young's inequality to find that $\abs{b(u,v)}\leq \frac{1}{2} \norm{u}_{\Y}^2 + \frac{1}{2}\norm{v}_{\X}^2$, which yields the bound $\norm{u_n}_M \leq \norm{u}_{\Y}$; this completes the proof since $n$ was arbitrary.
\end{proof}

\section{Spectral equivalence of the Schur complement preconditioner}\label{sec:schur}

In this section we analyse the spectral equivalence between the preconditioner $\bo H$, defined in \eqref{eq:algo_H_def}, and the Schur complement matrix $\bo S$, leading to the proof of Theorem~\ref{thm:general_spectral_equivalence}.
The first ingredient for the analysis, shown in~Lemma~\ref{lem:jump_bounds} below, is the fact that the jump terms in the bilinear form $j(\cdot,\cdot)$ can be controlled by the other terms in $s(\cdot,\cdot)$ with fully robust constants that do not depend on $N$ or on the spatial matrices.

\begin{lemma}[Bound on the jump terms]\label{lem:jump_bounds}
Let $\{\eps_n\}_{n=1}^N \subset \R_{>0}$ be an arbitrary collection of positive real numbers. Then, for any $v\in \Vt$, there holds
\begin{equation}
\begin{aligned}
0\leq j(v,v) \leq \sum_{n=1}^N  \int_{I_n} \frac{1}{\eps_n} \norm{\p_t \calI v}_{M A_n^{-1} M}^2 +  \eps_n \norm{v}_{A_n}^2\,\dd t. \label{eq:J_bound}    
\end{aligned}
\end{equation}
\end{lemma}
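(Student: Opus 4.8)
The plan is to prove the two inequalities in \eqref{eq:J_bound} separately. The lower bound $j(v,v)\geq 0$ is immediate from the definition \eqref{eq:J_def}, since $j(v,v) = \norm{v_N}_M^2 + \sum_{n=1}^N \norm{\lld v\rrd_{n-1}}_M^2$ is manifestly a sum of squared norms. The substance of the lemma is therefore the upper bound, and the key observation I would use is that the jump $\lld v\rrd_{n-1} = v_n - v_{n-1}$ is, up to the factor $\tau_n$, exactly the reconstructed time derivative on $I_n$: recall from Section~\ref{sec:euler_time_global} that $(\p_t\calI v)|_{I_n} = \tfrac{1}{\tau_n}(v_n - v_{n-1})$, so that $\lld v\rrd_{n-1} = \tau_n\,(\p_t\calI v)|_{I_n}$ and hence $\norm{\lld v\rrd_{n-1}}_M^2 = \tau_n^2 \norm{(\p_t\calI v)|_{I_n}}_M^2 = \tau_n \int_{I_n}\norm{\p_t\calI v}_M^2\,\dd t$, since the integrand is constant on $I_n$.

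The next step is to bound $\norm{v_N}_M^2$ and each jump in terms of the quantities appearing on the right-hand side of \eqref{eq:J_bound}. I would write $v_N$ as the telescoping sum $v_N = \sum_{n=1}^N \lld v\rrd_{n-1}$ (using the convention $v_0 = 0$), so that $\norm{v_N}_M^2 \leq \big(\sum_{n=1}^N \norm{\lld v\rrd_{n-1}}_M\big)^2 \leq N \sum_{n=1}^N \norm{\lld v\rrd_{n-1}}_M^2$ by Cauchy--Schwarz. This introduces a factor $N$, which one then has to absorb; combined with $j(v,v) \leq (N+1)\sum_{n=1}^N\norm{\lld v\rrd_{n-1}}_M^2$ this does not by itself give a robust bound, so this naive route is likely the wrong one. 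Instead I would try to bound $j(v,v)$ directly without passing through $\norm{v_N}_M^2$ crudely: observe that $j(v,v)$ is a quadratic form in the jump vector, and one should be able to estimate it more cleverly.

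The cleaner approach, and the one I would pursue, is to estimate each term $\norm{\lld v\rrd_{n-1}}_M^2$ using the duality structure. For any $n$ and any $\eps_n>0$, we have by the definition of the dual norm and Young's inequality
\begin{equation*}
\norm{\lld v\rrd_{n-1}}_M^2 = \tau_n\,(\p_t\calI v)|_{I_n}^\top M\, \lld v\rrd_{n-1} \leq \tau_n\,\norm{(\p_t\calI v)|_{I_n}}_{MA_n^{-1}M}\,\norm{\lld v\rrd_{n-1}}_{A_n} \leq \tfrac{\tau_n}{2}\Big(\tfrac{1}{\eps_n}\norm{(\p_t\calI v)|_{I_n}}_{MA_n^{-1}M}^2 + \eps_n\norm{\lld v\rrd_{n-1}}_{A_n}^2\Big),
\end{equation*}
but again $\norm{\lld v\rrd_{n-1}}_{A_n} = \tau_n \norm{(\p_t\calI v)|_{I_n}}_{A_n}$ is not directly of the form $\norm{v}_{A_n}$. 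The right identity to exploit is likely the one hidden in the proof of Corollary~\ref{cor:max_norm_bound}: there the identity $b(u,v) = \tfrac12\norm{u_n}_M^2 + \tfrac12\sum_{m=1}^n\norm{\lld u\rrd_{m-1}}_M^2 + \norm{v}_\X^2$ with the truncated test function shows that $j$-type quantities are controlled by $b(u,v)$ and hence by $\norm{u}_\Y\norm{v}_\X$. I would therefore mimic that computation: for each $n$, take the truncated function $w^{(n)}$ with $w^{(n)}_m = \eps_m^{-1}\,\text{(something)}\cdot v_m$ for $m\le n$ and $0$ otherwise — more precisely, I expect the correct test function to be a weighted truncation whose $\X$-norm produces exactly $\sum\eps_n\int_{I_n}\norm{v}_{A_n}^2$ and whose pairing with $\p_t\calI v$ produces exactly $\sum\eps_n^{-1}\int_{I_n}\norm{\p_t\calI v}_{MA_n^{-1}M}^2$, with $j(v,v)$ falling out as the remaining positive term. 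The main obstacle is getting the weights and the telescoping exactly right so that the $N$-dependence cancels; once the correct weighted truncated test function is identified, the estimate should follow from Young's inequality and a telescoping argument exactly as in Corollary~\ref{cor:max_norm_bound}.
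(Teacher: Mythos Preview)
You are missing the one-line identity that makes this lemma trivial, and in its absence you are being pushed toward increasingly elaborate constructions that never quite close. The key point is identity~\eqref{eq:integration_identity} from the proof of Theorem~\ref{thm:symm}:
\[
j(v,v) = \sum_{n=1}^N \int_{I_n} (\p_t \calI v, v)_M + (v, \p_t \calI v)_M \,\dd t = 2 \sum_{n=1}^N \int_{I_n} (\p_t \calI v, v)_M \,\dd t.
\]
Once you have this, the dual-norm definition gives $(\p_t \calI v, v)_M \leq \norm{\p_t \calI v}_{MA_n^{-1}M}\norm{v}_{A_n}$, and Young's inequality $2ab \leq \eps_n^{-1}a^2 + \eps_n b^2$ finishes the proof immediately. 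That is the entire argument in the paper.

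Your difficulty stems from trying to bound the individual pieces of $j(v,v)$ --- namely $\norm{v_N}_M^2$ and each $\norm{\lld v\rrd_{n-1}}_M^2$ --- separately. This cannot work robustly: $\norm{v_N}_M^2$ on its own is \emph{not} controlled by the right-hand side of~\eqref{eq:J_bound} with $N$-independent constants (take $v$ constant in time to see this). The term $\norm{v_N}_M^2$ is only controlled \emph{in combination} with the jump terms, via the telescoping that produces~\eqref{eq:integration_identity}. Your weighted-truncated-test-function idea is essentially a roundabout attempt to rediscover that telescoping, but you should simply invoke~\eqref{eq:integration_identity} directly rather than look to Corollary~\ref{cor:max_norm_bound}.
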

\begin{proof}
We find that $0\leq j(v,v)  \leq 2 \sum_{n=1}^N \int_{I_n} \norm{\p_t \calI v}_{MA_n^{-1}M}\norm{v}_{A_n}\,\dd t$ as a result of the definition of $\norm{\cdot}_{M A_n^{-1} M}$ in \eqref{eq:integration_identity}; then  \eqref{eq:J_bound} follows from Young's inequality.
\end{proof}

In the analysis below, we will use the auxiliary bilinear form $s_D\colon \Vt \times \Vt \tends \R$ defined by
\begin{equation}\label{eq:D_def}
s_D(u,v) \coloneqq \sum_{n=1}^N \left\{ \int_{I_n} (\p_t \calI u,\p_t\calI v)_{M A_n^{-1}M} \,\dd t + \frac{1}{1+\delta_{nN}} \int_{I_n} (u,v)_{A_n}\,\dd t \right\},
\end{equation}
where it is recalled that the weight-term $(1+\delta_{nN})^{-1}$ is equal to $1$ for all $1\leq n<N$ and is equal to $1/2$ if $n=N$. Thus it is seen that $s_D(\cdot,\cdot)$ is closely related to $s(\cdot,\cdot)$, where the jump bilinear form $j(\cdot,\cdot)$ has been removed, and a weight is included in one of the terms. This weight is needed later for the analysis of the DST below, see Remark~\ref{rem:weight_term}.
An almost immediate consequence of Lemma~\ref{lem:jump_bounds} is that $s_D(\cdot,\cdot)$ and $s(\cdot,\cdot)$ are spectrally equivalent with fully robust constants; thus $s_D(\cdot,\cdot)$ represents the dominant terms in $s(\cdot,\cdot)$.

\begin{lemma}[Spectral equivalence]\label{lem:spectral_SD}
Let $s_D$ be defined in \eqref{eq:D_def}. Then, we have
\begin{equation}\label{eq:SD_spectral_equiv}
\begin{aligned}
  s_D(v,v) \leq s(v,v) \leq 3 \, s_D(v,v) &&& \forall\, v\in \Vt.
\end{aligned}
\end{equation}
\end{lemma}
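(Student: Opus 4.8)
The plan is to derive \eqref{eq:SD_spectral_equiv} directly from the identity \eqref{eq:S_identity} together with the jump bound from Lemma~\ref{lem:jump_bounds}. First I would write, using \eqref{eq:S_identity},
\[
s(v,v) = \sum_{n=1}^N \int_{I_n} \norm{\p_t \calI v}_{M A_n^{-1} M}^2 + \norm{v}_{A_n}^2 \,\dd t + j(v,v),
\]
and recall from \eqref{eq:D_def} that $s_D(v,v)$ is the same sum except that the $\norm{v}_{A_n}^2$ term carries the weight $(1+\delta_{nN})^{-1}$ and there is no jump term. Since $(1+\delta_{nN})^{-1} \le 1$, the first (leftmost) inequality $s_D(v,v) \le s(v,v)$ is immediate once one also notes $j(v,v) \ge 0$, which is the left half of \eqref{eq:J_bound}.

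For the upper bound, the key is to choose the free parameters $\eps_n$ in Lemma~\ref{lem:jump_bounds} appropriately. Taking $\eps_n = 1$ for $1\le n < N$ and $\eps_N = \tfrac12$ (or more simply $\eps_n=1$ for all $n$ and absorbing the last term separately) gives
\[
j(v,v) \le \sum_{n=1}^N \int_{I_n} \frac{1}{\eps_n}\norm{\p_t \calI v}_{M A_n^{-1}M}^2 + \eps_n \norm{v}_{A_n}^2 \,\dd t,
\]
and adding this to the explicit expression for $s(v,v)$ should bound $s(v,v)$ by a sum of the form $\sum_n \int_{I_n} c_n \norm{\p_t \calI v}_{M A_n^{-1}M}^2 + d_n \norm{v}_{A_n}^2\,\dd t$ with $c_n, d_n$ bounded by $3$ times the corresponding coefficients in $s_D$. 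Concretely, for $n<N$ the $\norm{\p_t \calI v}^2$ coefficient is $1 + 1/\eps_n = 2$ and the $\norm{v}_{A_n}^2$ coefficient is $1+\eps_n = 2$, both $\le 3\cdot 1$; for $n=N$ the $s_D$ weight on $\norm{v}_{A_N}^2$ is $1/2$, so one needs the coefficient $1+\eps_N \le 3/2$, i.e.\ $\eps_N \le 1/2$, and then the $\norm{\p_t\calI v}^2$ coefficient is $1+1/\eps_N = 3 \le 3\cdot 1$. Thus the choice $\eps_N = 1/2$, $\eps_n=1$ otherwise, yields $s(v,v) \le 3\, s_D(v,v)$ term by term.

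The only genuine subtlety, rather than a real obstacle, is bookkeeping the $n=N$ block correctly: the weight $(1+\delta_{nN})^{-1}$ in $s_D$ is exactly what makes the constant $3$ come out cleanly at the last time-step, and one has to match $\eps_N$ to it. Everything else is elementary: the left inequality uses only nonnegativity of $j$, and the right inequality is Young's inequality with the stated parameters followed by a coefficient-wise comparison. I would present the proof in essentially three lines — invoke \eqref{eq:S_identity}, invoke Lemma~\ref{lem:jump_bounds} with the chosen $\{\eps_n\}$, and read off the constants — since no deeper structure is involved.
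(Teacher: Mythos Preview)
Your proposal is correct and follows essentially the same route as the paper: the lower bound uses $j(v,v)\ge 0$ and $(1+\delta_{nN})^{-1}\le 1$, and the upper bound applies Lemma~\ref{lem:jump_bounds} with $\eps_n=(1+\delta_{nN})^{-1}$, which is exactly your choice $\eps_n=1$ for $n<N$ and $\eps_N=\tfrac12$. The paper simply packages the resulting coefficients as $(2+\delta_{nN})\le 3$ times the $s_D$ summands, but the argument is the same.
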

\begin{proof}
The lower bound $s_D(v,v) \leq s(v,v)$ for all $v\in \Vt$ follows directly from the fact that $j(v,v)\geq 0$ as shown by~\eqref{eq:J_def}. Next, we use Lemma~\ref{lem:jump_bounds} with $\eps_{n}\coloneqq(1+\delta_{nN})^{-1}$ to obtain
\begin{equation*}
\begin{split}
s(v,v) &\leq \sum_{n=1}^N \left\{ \left(1+\frac{1}{\eps_n}\right)\int_{I_n}\norm{\p_t \calI v}_{M A_n^{-1}M}^2\,\dd t + (1+\eps_n) \int_{I_n}\norm{v}_{A_n}^2\,\dd t \right\}
\\
&= \sum_{n=1}^N  \left(2+\delta_{nN}\right) \left\{\int_{I_n}\norm{\p_t \calI v}_{M A_n^{-1}M}^2\,\dd t + \frac{1}{1+\delta_{nN}} \int_{I_n}\norm{v}_{A_n}^2\,\dd t \right\},
\end{split}
\end{equation*}
which implies $s(v,v) \leq 3 \, s_D(v,v)$ for all $v\in\Vt$.
\end{proof}

\subsection{Discrete Sine Transform} Let the piecewise constant real-valued functions $\{\vphi_{k}\}_{k=1}^N \subset \oplus_{n=1}^N \calP_0(I_n;\R)$ be defined by
\begin{equation}\label{eq:vphi_def}
\begin{aligned}
  \vphi_k|_{I_n} = \vphi_k^n \coloneqq \sin\left( \frac{(2k-1) n\pi}{2N} \right), \qquad n=1,\dots, N.
\end{aligned}
\end{equation}
It is important to note that the functions $\vphi_k$ are globally supported in time. Moreover, we also define $\vphi_k^0 \coloneqq 0$ for all $1\leq k \leq N$. These functions are linearly independent and form a basis of $\oplus_{n=1}^N \calP_0(I_n;\R)$, so any function $v \in \Vt$ can be written in the form $v = \sum_{k=1}^N \hat{v}_k\, \vphi_k$ with coefficients $\hat{v}_k \in \Vh$ for each $k=1,\dots,N$.
The following result shows several basic properties of the basis induced by $\{\vphi_k\}_{k=1}^N$.

\begin{lemma}[Discrete orthogonality]\label{lem:orthogonality}
The functions $\{\vphi_k\}_{k=1}^N$ be defined by \eqref{eq:vphi_def}.
form an orthogonal basis of $\oplus_{n=1}^N \calP_0(I_n;\R)$ in the following discrete inner-products: for any $1\leq k,\,j \leq N$, we have
\begin{equation}\label{eq:L2_orthog}
\begin{aligned}
\sum_{n=1}^N\frac{1}{1+\delta_{nN}}  \vphi^n_k \, \vphi^n_j  = \frac{N}{2} \delta_{kj}, &&&
\sum_{n=1}^N (\vphi^n_k-\vphi^{n-1}_k)\,(\vphi^n_j - \vphi^{n-1}_j) = \frac{N}{2} \mu_k^2 \delta_{kj},
\end{aligned}
\end{equation}
where $\delta_{kj}$ is the Kronecker delta, and where the positive real numbers $\mu_k$ are defined in~\eqref{eq:Hk_def}. For any function $v\in \Vt$, we have the change of basis formulas given in \eqref{eq:algo_DST} and \eqref{eq:algo_DST_inverse}.
\end{lemma}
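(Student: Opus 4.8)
The plan is to verify the two discrete orthogonality identities in \eqref{eq:L2_orthog} by direct trigonometric computation, and then to deduce the change of basis formulas \eqref{eq:algo_DST} and \eqref{eq:algo_DST_inverse} as a consequence. For the first identity, I would write $\vphi_k^n \vphi_j^n = \sin\!\left(\frac{(2k-1)n\pi}{2N}\right)\sin\!\left(\frac{(2j-1)n\pi}{2N}\right)$ and use the product-to-sum formula $\sin a \sin b = \tfrac12[\cos(a-b) - \cos(a+b)]$, reducing the weighted sum to evaluating $\sum_{n=1}^N \frac{1}{1+\delta_{nN}}\cos\!\left(\frac{(k-j)n\pi}{N}\right)$ and $\sum_{n=1}^N \frac{1}{1+\delta_{nN}}\cos\!\left(\frac{(k+j-1)n\pi}{N}\right)$. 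The weight $\frac{1}{1+\delta_{nN}}$ is precisely what is needed to turn these into clean geometric-type sums: the $n=N$ term gets halved so that the sum behaves like a symmetric sum over a full period. For $k=j$ the first cosine sum is $\sum_{n=1}^{N-1} 1 + \tfrac12 = N - \tfrac12$, and one checks the second sum contributes $-\tfrac12$, giving $\tfrac{N}{2}$; for $k\neq j$ both sums vanish by the standard evaluation of $\sum \cos(m n\pi/N)$ with the half-weight at the endpoint.

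For the second identity I would first compute the telescoped differences: $\vphi_k^n - \vphi_k^{n-1} = \sin\!\left(\frac{(2k-1)n\pi}{2N}\right) - \sin\!\left(\frac{(2k-1)(n-1)\pi}{2N}\right)$, and apply the difference-to-product identity $\sin a - \sin b = 2\cos\!\left(\frac{a+b}{2}\right)\sin\!\left(\frac{a-b}{2}\right)$, which yields $\vphi_k^n - \vphi_k^{n-1} = 2\sin\!\left(\frac{(2k-1)\pi}{4N}\right)\cos\!\left(\frac{(2k-1)(2n-1)\pi}{4N}\right) = \mu_k \cos\!\left(\frac{(2k-1)(2n-1)\pi}{4N}\right)$, using the definition of $\mu_k$ in \eqref{eq:Hk_def}. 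Then $\sum_{n=1}^N (\vphi_k^n - \vphi_k^{n-1})(\vphi_j^n - \vphi_j^{n-1}) = \mu_k \mu_j \sum_{n=1}^N \cos\!\left(\frac{(2k-1)(2n-1)\pi}{4N}\right)\cos\!\left(\frac{(2j-1)(2n-1)\pi}{4N}\right)$, and again a product-to-sum identity reduces this to cosine sums over $n$; the standard orthogonality of the DCT-II/DCT-III cosine basis gives $\sum_{n=1}^N \cos\!\left(\frac{(2k-1)(2n-1)\pi}{4N}\right)\cos\!\left(\frac{(2j-1)(2n-1)\pi}{4N}\right) = \tfrac{N}{2}\delta_{kj}$, so the whole expression is $\tfrac{N}{2}\mu_k^2 \delta_{kj}$ as claimed. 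The linear independence of $\{\vphi_k\}$ follows from either orthogonality relation, and since there are $N$ of them in an $N$-dimensional space they form a basis.

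Finally, for the change of basis formulas: writing $v = \sum_{k=1}^N \hat v_k \vphi_k$ means $v_n = v|_{I_n} = \sum_{k=1}^N \hat v_k \vphi_k^n = \sum_{k=1}^N \hat v_k \sin\!\left(\frac{(2k-1)n\pi}{2N}\right)$, which is exactly \eqref{eq:algo_DST_inverse}. To invert, I multiply both sides by $\frac{1}{1+\delta_{nN}}\vphi_j^n$ and sum over $n$, using the first orthogonality relation in \eqref{eq:L2_orthog}: $\sum_{n=1}^N \frac{1}{1+\delta_{nN}} v_n \sin\!\left(\frac{(2j-1)n\pi}{2N}\right) = \sum_{k=1}^N \hat v_k \cdot \tfrac{N}{2}\delta_{kj} = \tfrac{N}{2}\hat v_j$, so $\hat v_j = \frac{2}{N}\sum_{n=1}^N \frac{1}{1+\delta_{nN}} v_n \sin\!\left(\frac{(2j-1)n\pi}{2N}\right)$, which is \eqref{eq:algo_DST}. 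Since all coefficients $\hat v_k$ lie in $\Vh$, the sums are $\Vh$-valued and the argument goes through componentwise.

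The main obstacle is bookkeeping of the endpoint-weight $\frac{1}{1+\delta_{nN}}$ in the cosine sums: one has to be careful that this weight is exactly what makes the first identity come out to $\tfrac{N}{2}\delta_{kj}$ rather than something messier, and in particular that it does \emph{not} appear in the second identity because the difference operator already respects the right boundary structure (recall $\vphi_k^0 = 0$, so the $n=1$ term is a genuine boundary term and no extra weight is needed). Everything else is a routine application of product-to-sum trigonometric identities together with the classical fact $\sum_{n=1}^{N} \cos\!\left(\frac{m(2n-1)\pi}{2N}\right) = 0$ for integer $m$ not a multiple of $2N$ and $= N$ (or $-N$) in the degenerate cases; I would cite the standard DCT orthogonality rather than rederiving it in full if a suitable reference is at hand, but the derivation is short enough to include.
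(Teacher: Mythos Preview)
The paper states Lemma~\ref{lem:orthogonality} without proof, treating it as a standard fact about the discrete sine transform; there is therefore no proof in the paper to compare against. Your argument is correct and supplies precisely the details the paper omits: the product-to-sum reduction for the first identity, the difference-to-product identity yielding $\vphi_k^n-\vphi_k^{n-1}=\mu_k\cos\!\bigl(\tfrac{(2k-1)(2n-1)\pi}{4N}\bigr)$ for the second, and the derivation of the change-of-basis formulas from the first orthogonality relation. Your remark that the endpoint weight $\tfrac{1}{1+\delta_{nN}}$ is exactly what is needed to make the cosine sums collapse to $-\tfrac12$ (for nonzero frequency) is the key observation, and it is worth writing out in full rather than leaving as ``one checks''; the identity $\sum_{n=1}^{N}\tfrac{1}{1+\delta_{nN}}\cos\!\bigl(\tfrac{mn\pi}{N}\bigr)=-\tfrac12$ for integer $m$ not a multiple of $2N$ follows cleanly from the symmetry $\cos\!\bigl(\tfrac{m(2N-n)\pi}{N}\bigr)=\cos\!\bigl(\tfrac{mn\pi}{N}\bigr)$ and the vanishing of the full geometric sum over $n=0,\dots,2N-1$.
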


We will make use of the following result from \cite[Theorem~4]{PearsonWathen2012}.
\begin{lemma}[Pearson \& Wathen]\label{lem:pearson}
Let $M$ and $A$ be arbitrary symmetric positive definite matrices and let $\lambda \geq 0$ be an arbitrary nonnegative real number. Then, we have
  \[
\begin{aligned}
\frac{1}{2} \leq   \frac{ v^\top ( MA^{-1}M + \lambda A )\, v }{ v^{\top} ( M + \sqrt{\lambda} A ) A^{-1}  ( M + \sqrt{\lambda} A )\, v} \leq 1  &&&\forall\, v \in \Vh\setminus\{0\}.
\end{aligned}
  \]
\end{lemma}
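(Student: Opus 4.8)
The plan is to reduce the statement to a scalar inequality by simultaneous diagonalization, since both the numerator and the denominator are quadratic forms built from the same two symmetric positive definite matrices $M$ and $A$. First I would use the fact that the generalized eigenvalue problem $M x = \theta A x$ admits a basis of eigenvectors that simultaneously diagonalizes $M$ and $A$; concretely, there is an invertible matrix $T$ such that $T^\top A T = \Id$ and $T^\top M T = \Diag\{\theta_i\}$ with each $\theta_i > 0$. Writing $v = T w$, the numerator $v^\top(MA^{-1}M + \lambda A)v$ becomes $\sum_i (\theta_i^2 + \lambda) w_i^2$, while the denominator $v^\top (M + \sqrt\lambda A) A^{-1} (M + \sqrt\lambda A) v$ becomes $\sum_i (\theta_i + \sqrt\lambda)^2 w_i^2$. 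Here I am using that $A^{-1}$ in the new coordinates is again the identity, and that $M + \sqrt\lambda A$ becomes $\Diag\{\theta_i + \sqrt\lambda\}$.

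Having diagonalized, the quotient is a weighted average (with nonnegative weights $w_i^2$) of the scalar ratios $r(\theta_i) \coloneqq (\theta_i^2 + \lambda)/(\theta_i + \sqrt\lambda)^2$, so it suffices to prove $\tfrac12 \le r(\theta) \le 1$ for every $\theta > 0$ and every $\lambda \ge 0$. The upper bound is immediate: $(\theta + \sqrt\lambda)^2 = \theta^2 + 2\sqrt\lambda\,\theta + \lambda \ge \theta^2 + \lambda$ since the cross term $2\sqrt\lambda\,\theta$ is nonnegative. For the lower bound, I would show $2(\theta^2 + \lambda) \ge (\theta + \sqrt\lambda)^2$, which rearranges to $\theta^2 - 2\sqrt\lambda\,\theta + \lambda \ge 0$, i.e. $(\theta - \sqrt\lambda)^2 \ge 0$; this always holds. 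So both bounds reduce to elementary perfect-square inequalities.

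There is essentially no hard part here: the only point requiring a word of care is the justification of simultaneous diagonalization (which is standard for a pair of symmetric matrices one of which is positive definite), and the observation that once we pass to the diagonalizing coordinates, the Rayleigh-type quotient lies between the minimum and maximum of the scalar ratios $r(\theta_i)$, hence between $\tfrac12$ and $1$. The case $\lambda = 0$ is trivial (the ratio is identically $1$), and for $\lambda > 0$ the above goes through verbatim. If one prefers to avoid diagonalization, an alternative is to substitute $w = A^{1/2} v$ and $B = A^{-1/2} M A^{-1/2}$, reducing to the claim $\tfrac12 (B^2 + \lambda \Id) \le (B + \sqrt\lambda\,\Id)^2 \le B^2 + \lambda \Id$ as an operator inequality, which follows from the spectral theorem applied to the single symmetric positive definite matrix $B$ together with the same scalar inequalities $\tfrac12(t^2+\lambda) \le (t+\sqrt\lambda)^2 \le t^2 + \lambda$ for $t > 0$. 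Either route is short; since the result is quoted from \cite{PearsonWathen2012}, I would simply cite it and omit the proof, or include the two-line diagonalization argument for completeness.
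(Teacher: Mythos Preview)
The paper does not actually prove this lemma: it is quoted verbatim from \cite[Theorem~4]{PearsonWathen2012} and used as a black box, which is exactly what you suggest in your last sentence. Your diagonalization argument is correct and is essentially the standard proof; the reduction to the scalar inequalities $\tfrac{1}{2}\leq (\theta^2+\lambda)/(\theta+\sqrt{\lambda})^2 \leq 1$ via simultaneous diagonalization (or, equivalently, via the substitution $B=A^{-1/2}MA^{-1/2}$) is precisely how one would prove it from scratch.

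One small slip to fix: in your alternative operator-inequality formulation you have the inequalities the wrong way round. You wrote $\tfrac12 (B^2 + \lambda \Id) \le (B + \sqrt\lambda\,\Id)^2 \le B^2 + \lambda \Id$ and the scalar version $(t+\sqrt\lambda)^2 \le t^2 + \lambda$, but since the cross term $2\sqrt{\lambda}\,t$ is nonnegative the correct chain is $B^2+\lambda\Id \leq (B+\sqrt{\lambda}\,\Id)^2 \leq 2(B^2+\lambda\Id)$, which is what gives the Rayleigh quotient bounds $\tfrac12$ and $1$ in the right order. Your main argument in the first two paragraphs has the inequalities oriented correctly, so this is only a typo in the alternative route.
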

 
\subsection{Proof of Theorem~\ref{thm:general_spectral_equivalence}}
Let $\tau$ and let $A$ be as in \eqref{eq:tauA_assumption}. As an intermediary step, we will use the bilinear form $s_{\dagger}(\cdot,\cdot) \approx s_D(\cdot,\cdot)$ defined on $\Vt$ by
\begin{equation}\label{eq:approx_D_def}
\begin{aligned}
s_{\dagger}(u,v) \coloneqq \sum_{n=1}^N\left\{  \int_{I_n} \frac{\tau_n}{\tau}  (\p_t \calI u, \p_t \calI v)_{M A^{-1} M}\,\dd t  +  \frac{1}{1+\delta_{nN}} \int_{I_n} \frac{\tau}{\tau_n} (u,v)_A \,\dd t \right\}.
\end{aligned}
\end{equation} 
The bilinear forms $s_D(\cdot,\cdot)$ and $s_{\dagger}(\cdot,\cdot)$ are spectrally equivalent, since \eqref{eq:tauA_assumption} and \eqref{eq:An_dual_spectral_equivalence} imply that $\frac{1}{ \alpha } s_{\dagger}(v,v) \leq s_D(v,v) \leq  \alpha  \,s_{\dagger}(v,v)$ for all $v\in\Vt$.
Therefore, these inequalities and Lemma~\ref{lem:spectral_SD} imply that $\frac{1}{\alpha} s_{\dagger}(v,v) \leq s(v,v) \leq 3 \, \alpha \, s_{\dagger}(v,v)$ for all $v\in\Vt$.
In matrix notation, the matrix $\bo S_{\dagger}$, that represents $s_{\dagger}(\cdot,\cdot)$ in the standard basis of $\Vt$, satisfies
\begin{equation}\label{eq:S_Dw_equiv}
\begin{aligned}
\frac{1}{ \alpha} \,\bo S_{\dagger} \leq \bo S \leq 3 \alpha \, \bo S_{\dagger}.
\end{aligned}
\end{equation}
Next we establish the connection between $\bo{S}_{\dagger}$ and $\bo H$ as follows. Let $u$ and $v \in \Vt$ be arbitrary; since $v$ and $\p_t \calI v$ are piecewise constant on each time-step, and since $\p_t \calI v|_{I_n} = \frac{1}{\tau_n}(v_n-v_{n-1})$ for each $n$, we can simplify~\eqref{eq:approx_D_def} to obtain
\[
s_{\dagger}(u,v) = \sum_{n=1}^N \left\{ \frac{1}{\tau} (u_n - u_{n-1},v_n-v_{n-1})_{M A^{-1} M} + \frac{1}{1+\delta_{nN}} \tau (u_n,v_n)_A \right\}.
\]
Therefore, Lemma~\ref{lem:orthogonality} implies that the change of basis to $\{\vphi_k\}_{k=1}^N$ gives
\begin{equation}\label{eq:sdag_transform}
\begin{aligned}
s_{\dagger}(u,v) = \frac{N}{2} \sum_{k=1}^N \left\{ \frac{\mu_k^2}{\tau} (\hat{u}_k,\hat{v}_k)_{M A^{-1} M} + \tau (\hat{u}_k, \hat{v}_k)_A  \right\} &&& \forall\,u,\, v\in \Vt,
\end{aligned}
\end{equation}
where $\{\hat{u}_k\}_{k=1}^N$ and $\{\hat{v}_k\}_{k=1}^N$ denote the coefficients of the basis expansion of $u$ and $v$ with respect to $\{\vphi_k\}_{k=1}^N$, as in \eqref{eq:algo_DST_inverse}.
In matrix notation, the identity~\eqref{eq:sdag_transform} shows that
\begin{equation}
  \bo S_{\dagger} = \bo\Phi^{\top} \bo{\hat{D}} \,\bo\Phi, \qquad
 \bo{\hat{D}} \coloneqq
\frac{N}{2} 
\Diag \left\{ \frac{\mu_{k}^2}{\tau} M A^{-1} M + \tau A \right\}_{k=1}^N,
\end{equation}
where we recall that $\bo\Phi$ is the matrix representation of the change of basis to $\{\vphi_k\}_{k=1}^N$.
In other words, the change of basis to $\{\vphi_k\}_{k=1}^N$ block-diagonalizes $\bo S_{\dagger}$.

To complete the proof, recall that $\bo H $ is defined by $\bo H = \bo \Phi^{\top} \bo{\hat{H}}\, \bo \Phi$, where $\bo{\hat{H}}$ is defined in \eqref{eq:algo_H_def}.
We therefore have the change of basis identities $\bo v^\top \bo S_{\dagger} \bo v = \bo{\hat{v}}^\top \bo{\hat{D}} \bo{\hat{v}}$ and $\bo v^\top \bo H \bo v = \bo{\hat{v}}^\top \bo{\hat{H}} \bo{\hat{v}}$ for any $\bo v\in \Vh^N$.
Hence, we can apply Lemma~\ref{lem:pearson} block-by-block to the matrices $\bo{\hat{D}}$ and $\bo{\hat{H}}$, and we deduce that 
\begin{equation}\label{eq:HD_equivalence}
\begin{aligned}
\frac{1}{2} \bo H \leq \bo S_{\dagger} \leq \bo H.
\end{aligned}
\end{equation}
Therefore, we obtain \eqref{eq:general_SH_equiv} from \eqref{eq:S_Dw_equiv} and \eqref{eq:HD_equivalence}.
Using the assumption~\eqref{eq:Hk_spectral_equivalence}, it is straightforward to show from the definition of $\bo H$ in \eqref{eq:algo_H_def} and $\widetilde{\bo H}$ in~\eqref{eq:Happrox_def} that $\gmin \widetilde{\bo H} \leq \bo H \leq \gmax \widetilde{\bo H}$.
Using these inequalities, we obtain the spectral equivalence~\eqref{eq:SHapprox_equiv} of $\bo S$ and $\widetilde{\bo H}$ from the equivalence~\eqref{eq:general_SH_equiv} between $\bo S$ and $\bo H$.
\qed

\begin{remark}[Weight terms]\label{rem:weight_term}
It might appear desirable to avoid the weight $(1+\delta_{nN})^{-1}$ in the bilinear forms $s_D(\cdot,\cdot)$ and $s_{\dagger}(\cdot,\cdot)$, which would have the advantage of tightening the constant in the upper bound~\eqref{eq:general_SH_equiv} from $3\alpha $  to $2\alpha$.
Moreover, the basis that block-diagonalizes the un-weighted version of $s_{\dagger}(\cdot,\cdot)$ is known explicitly through the functions $\{\phi_k\}_{k=1}^N$ where $\phi_k|_{I_n} \coloneqq \sin\left( \frac{(2k-1)\,n\,\pi}{2N +1}  \right)$ for each $n=1,\dots,N$.
However, the appearance of the term $2N+1$ in the denominators seems to be inconvenient for the implementation of the fast DST, which is why it is not considered further.
\end{remark}

\section{Numerical experiments}\label{sec:numexp}
We now study the efficiency, robustness and parallel scaling of the proposed method using a range of example problems in one, two, and three space dimensions.

\subsection{Condition numbers of Schur complement preconditioner}\label{sec:numexp1}
First, we assess the sharpness of the bounds in Theorem~\ref{thm:general_spectral_equivalence}, by computing numerically the extremal eigenvalues of the matrix $\bo H^{-1}\bo S$. For simplicity, we consider the one-dimensional heat equation on the spatial domain $\Omega=(0,1)$, with $T=1$, discretized by $P1$~FEM in space on a uniform mesh, and by the implicit Euler method with a uniform time-step size $\tau=T/N$. In this case, the matrices $M$ and $A_n=A$ represent respectively one dimensional mass and stiffness matrices.
Note that the reason for choosing in this experiment some low dimensional spatial problems is to guarantee the high accuracy of the eigenvalue solver. For this problem, the assumption~\eqref{eq:tauA_assumption} holds with $\alpha=1$. Therefore, the bound~\eqref{eq:general_SH_equiv} shows that $1/2\leq \lmin$ and $\lmax \leq 3$, where $\lmin$ and $\lmax$ denote respectively the minimal and maximal eigenvalues of $\bo H^{-1}\bo S$. 
This is in agreement with the results in Table~\ref{tab:eigenvalue_computations}, which suggest that the lower bound on the eigenvalues is indeed sharp, although the optimal upper bound appears rather to be $\lmax \leq 2$, leading to condition numbers $\kappa(\bo H^{-1}\bo S)\leq 4$ in these experiments.
Table~\ref{tab:eigenvalue_computations} leads to some further predictions. For instance, the results for $\lmax$ suggest that the damping parameter condition~\eqref{eq:damping_condition} simplifies here to $\omega< (1-\rhoa)/(1+\rhoa) \leq 1$.

\begin{table}[tb]
\begin{adjustbox}{max width=0.9\textwidth,center}
\begin{tabular}{| l | c | c | c | c | c | c | c| c| c |}
\hline
$h=1/64$ &  $N=4$ & $N=8$ & $N=16$ & $N=32$ & $N=64$ & $N=128$ & $N=256$ & $N=512$ & $N=1024$ \\ \hline
$\lmin$  & 0.8099 & 0.7080 & 0.6270 & 0.5728 & 0.5402 & 0.5223 & 0.5129 & 0.5081 & 0.5056 \\
$\lmax$ & 1.9999 & 1.9998 & 1.9996 & 1.9993 & 1.9986 & 1.9972 & 1.9944 & 1.9888 & 1.9780  \\
$\kappa(\bo H^{-1}\bo S)$ & 2.4693 & 2.8248 & 3.1893 & 3.4906 & 3.6994 & 3.8237 & 3.8885 & 3.9145 & 3.9122 \\ \hline 
 \multicolumn{10}{c}{~}  \\
\hline
$h=1/128$ &  $N=4$ & $N=8$ & $N=16$ & $N=32$ & $N=64$ & $N=128$ & $N=256$ & $N=512$ & $N=1024$ \\ \hline
$\lmin$ & 0.8099 & 0.7079 &  0.6270 & 0.5728 & 0.5402 &  0.5223 & 0.5129  & 0.5081 & 0.5056 \\
$\lmax$  & 2.0000 & 2.0000 &  1.9999 & 1.9998 & 1.9996 &  1.9993 & 1.9986 &   1.9972  & 1.9944 \\
$\kappa(\bo H^{-1}\bo S)$ & 2.4694 & 2.8250 &  3.1897 & 3.4916 & 3.7014 &  3.8278 & 3.8967 & 3.9310 & 3.9445\\ \hline      
\end{tabular}
\end{adjustbox}
\caption{Extremal eigenvalues and condition numbers of the matrix $\bo H^{-1}\bo S$ for the tests of Section~\ref{sec:numexp1}.}
\label{tab:eigenvalue_computations}
\end{table}

\subsection{Robustness with respect to time-steps, mesh-sizes, and approximate spatial solvers}\label{sec:numexp2}
We now study the robustness of the preconditioners with respect to variations in the time-steps, mesh-size, and also with respect to approximations in the spatial solvers that defined $\widetilde{\bo A}$ and $\widetilde{\bo H}$.
In this experiment, we consider the two dimensional heat equation on the spatial domain $\Om=(0,1)^2$, with $T=1$, and initial condition $u(0)=\sin(\pi x)\sin(\pi y)$. The problem is discretized in space by $P1$ FEM on a uniform mesh of sizes $h=2^{-k}$, $k=3,\dots,6$, and in time by the implicit Euler method with uniform time-steps. Again, for this problem, we also have $\alpha=1$.
The preconditioners $\widetilde{\bo A}$ and $\widetilde{\bo H}$ employ a fixed number of geometric multigrid (MG) V-cycles for the spatial solvers; i.e.\ the matrices $\widetilde{A}_n^{-1}$ and $\widetilde{H}_k^{-1}$ are defined as the application of either one or two MG V-cycles with Jacobi smoothers.
We then apply the inexact Uzawa method for these various choices of preconditioners, where we fix $\omega=0.9$ for the damping parameter.
In order to give a fair comparison of these different preconditioners, we compute at each iteration  $\norm{\bo u - \bo u_{j}}_{\bo S}$ for the exact error in the principal variable.
Figure~\ref{fig:varying_precond} shows the convergence histories of the inexact Uzawa method for these choices of spatial solvers, as well as for direct solvers. Here we fix $N=512$ and $h=1/64$. It is seen that the convergence rate does not depend significantly on the approximation of the spatial inverses.

\begin{figure}[tbh]
\begin{center}
\includegraphics{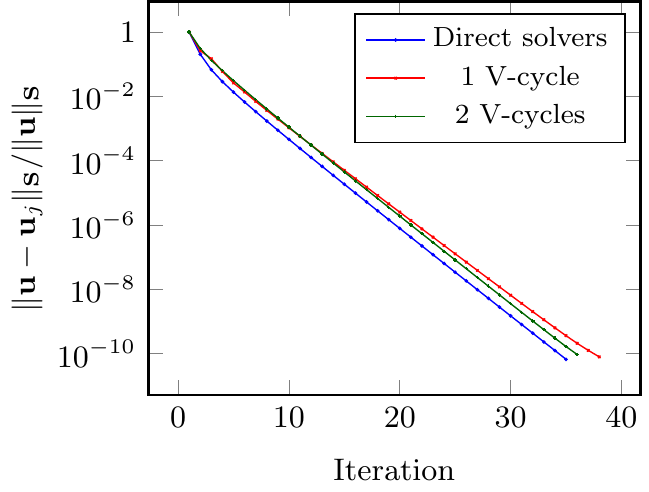}  
\end{center}
\caption{Convergence histories of the inexact Uzawa method for different choices of spatial solvers in Section~\ref{sec:numexp2}: direct solvers, one V-cycle and two V-cycles.}
\label{fig:varying_precond}
\end{figure}
\begin{table}[tbh]
\begin{adjustbox}{max width=0.9\textwidth,center}
\begin{tabular}{| l | c c c c |}
\hline
          & $h=1/8$ & $h=1/16$ & $h=1/32$ & $h=1/64$ \\ \hline
$N=128$ & 20 & 21 & 21 & 21 \\
$N=256$ & 21 & 22 & 22 & 22 \\
$N=512$ & 22 & 22 & 22 & 22  \\
$N=1024$ & 22 & 22 & 22 & 22 \\ \hline
\end{tabular}
\end{adjustbox}
\caption{Number of inexact Uzawa iterations required to satisfy $\norm{\bo u-\bo u_{j}}_{\bo S}<10^{-6} \norm{\bo u}_{\bo S}$ in the experiments of Section~\ref{sec:numexp2} (one MG V-cycle in space).}
\label{tab:meshtimestep_refinement}
\end{table}

Given the results in Figure~\ref{fig:varying_precond}, we now concentrate on the case of one V-cycle, and we now vary the mesh sizes and time-steps to verify the robustness with respect to the problem parameters.
The results are given in Table~\ref{tab:meshtimestep_refinement}, which presents the number of iterations required to achieve a relative error of $10^{-6}$ in the $\bo S$-norm of the error for the principal variable.
It is thus seen that the resulting method is robust with respect to the mesh size and number of time-steps. It is also robust with respect to the ratio of mesh and time-step sizes, as expected.

\subsection{Time-parallel computations}\label{sec:numexp3}
We now present our main set of numerical experiments, involving weak and strong scaling studies with large-scale parallel computations.
All the parallel computations in this work were performed on the Vulcan BlueGene/Q Supercomputer in Livermore, California.
We start with weak and strong scaling tests of the time-parallelism.
We solve the three dimensional heat equation on the spatial domain $\Omega = (0,1)^3$, with $T = 0.1$. The right hand side and the initial condition are chosen such that the exact solution is $u(t,x,y,z) = e^{-3 \pi^2 t} \sin(\pi x) \sin(\pi y) \sin(\pi z)$. The spatial discretization uses lowest-order hexahedral finite elements on a fixed uniform mesh with $4096$ elements. Uniform time-steps are used in time. 
For the weak scaling test, we assign $16$ time steps to each core; each core then has a fixed problem size of $157\,216$ unknowns.
For the strong scaling test we fix the number of time steps $N=65\,536$, which results overall in a global linear system of dimension $643\,956\,736$.
For the spatial solvers, we use an algebraic multigrid method (AMG) provided by the library \emph{hypre}~\cite{hypre}, and the parallel DST (based on the FFT) is provided by the library FFTW3~\cite{FFTW05}. We further use GMRES as an acceleration method for the inexact Uzawa method.

\begin{table}[tbh]
\begin{adjustbox}{max width=0.9\textwidth,center}
\begin{tabular}{| r | r | r | c | c c | c | c |}
\hline 
procs & $N$ & dofs & iter & time/iter & total time & time FFT (\%) & time AMG (\%) \\ \hline
1 & 16  & 157 216 & 15 & 1.87  &  28.00 &  0.9\%  &  84.5\% \\
2 & 32  & 314 432 & 15 & 1.85  &  27.75 &  1.5\%  &  83.4\% \\
4 & 64  & 628 864 & 15 & 1.81  &  27.16 &  1.7\%  &  82.8\% \\
8 & 128  &  1 257 728  & 15  & 1.77 & 26.60 & 1.9\% & 82.4\% \\
16 & 256  &  2 515 456  & 15  & 1.78 & 26.72 & 2.1\% & 82.1\% \\
32 & 512  &  5 030 912  & 15  & 1.79 & 26.78 & 2.3\% & 82.0\% \\
64 & 1 024 &  10 061 824 & 16  & 1.79 & 28.66 & 3.0\% & 81.3\% \\
128 & 2 048 & 20 123 648 & 19 & 1.81 & 34.35 & 4.1\% & 79.8\% \\
256 & 4 096 & 40 247 296 & 20 & 1.81 & 36.11 & 4.2\% & 79.5\% \\
512 & 8 192 & 80 494 592 & 21 & 1.80 & 37.88 & 4.2\% & 79.3\% \\
1 024 & 16 384 & 160 989 184 & 22 & 1.81 & 39.77 & 4.4\% & 79.0\% \\
2 048 & 32 768 & 321 978 368 & 22 & 1.82 & 40.10 & 5.3\% & 78.3\% \\
4 096 & 65 536 & 643 956 736 & 22 & 1.87 & 41.09 & 7.4\% & 76.4\% \\ \hline
\end{tabular}
\end{adjustbox}
\caption{Weak scaling test of Section~\ref{sec:numexp3}. Computational times in seconds.}
\label{tab:weakstrongscaling}
\vspace{1ex}
\begin{adjustbox}{max width=0.9\textwidth,center}
\begin{tabular}{| r | c | c | c | r r | c | c |}
\hline
procs & $N$ & dofs & iter & time/iter &  total time &  time FFT (\%) & time AMG (\%) \\ \hline
16   & 65 536 & 643 956 736 & 22 & 310.18 &  6823.88 & 3.9\% & 72.9\% \\
32   & 65 536 & 643 956 736 & 22 & 155.68 &  3425.04 & 4.1\% & 72.9\% \\
64   & 65 536 & 643 956 736 & 22 & 78.66  &  1730.53 & 4.8\% & 72.4\% \\
128  & 65 536 & 643 956 736 & 22 & 39.98  &  879.52  & 5.5\% & 72.0\% \\
256  & 65 536 & 643 956 736 & 22 & 20.89  &  459.60  & 7.1\% & 70.5\% \\
512  & 65 536 & 643 956 736 & 22 & 10.76  &  236.82  & 7.3\% & 70.9\% \\
1024 & 65 536 & 643 956 736 & 22 & 5.65  &  124.22 & 6.8\% & 72.3\% \\
2048 & 65 536 & 643 956 736 & 22 & 3.13  &  68.79  & 7.0\% & 74.1\% \\
4096 & 65 536 & 643 956 736 & 22 & 1.87  &  41.09  & 7.4\% & 76.4\% \\ \hline
\end{tabular}
\end{adjustbox}
\caption{Strong scaling test of Section~\ref{sec:numexp3}. Computational times in seconds.}
\label{tab:strongstrongscaling}
\end{table}

Tables~\ref{tab:weakstrongscaling} and~\ref{tab:strongstrongscaling} report the results of the weak and strong scaling tests, including the total number of iterations to reach a residual tolerance of $10^{-8}$, the time spent per iteration and the total time of the computation. The last two columns in each table show the percentages of the total time that are spent inside calls to the parallel FFT and the spatial solvers.
In the weak scaling test, we see that for small values of $N$, the number of iterations initially increases up to a maximum of $22$, after which it remains constant; this is explained by the pre-asymptotic behaviour of the condition numbers reported in Table~\ref{tab:eigenvalue_computations} above.
Most importantly, the time per iteration remains essentially constant, so the small increase in total time is due to the iteration count. In this sense the test shows very good weak scaling of the method.
Table~\ref{tab:strongstrongscaling} shows the good strong scaling of the method, where a doubling of the number of processors reduces the computational time by a factor often close to two.
The last two columns of both Tables~\ref{tab:weakstrongscaling} and \ref{tab:strongstrongscaling} show that the use of the FFT amounts to only a small part of the total computational time, whereas the AMG calls are dominant, with some additional time being spent outside of both FFT and AMG calls, for instance to handle some of the vector operations outside the preconditioners.

\subsection{Space-time parallel computations}\label{sec:numexp4}

We now consider space-time parallelism, where additional cores are used to apply the spatial solvers in parallel over space. The spatial parallelism here is provided by the library \emph{hypre}. 
We consider the same right hand side, initial condition and space-time domain as in the previous example. For the spatial approximations we again use lowest order hexahedral elements, where we decompose the spatial domain $\Omega = (0,1)^3$ into $262\,144$ elements, and we use 4096 time steps for the time discretization. This results in a global linear system for $2\,249\,728\, 000$ unknowns. With the same solver settings as in Section~\ref{sec:numexp3}, we use varying numbers of processors in space and time, denoted respectively by $p_x$ and $p_t$, thereby resulting in a total of $p_x p_t$ processors.

\begin{table}[tbh]
\begin{adjustbox}{max width=0.9\textwidth,center}
\begin{tabular}{|cr|rrrrrr|}
\hline
& & \multicolumn{6}{c|}{procs w.r.t. space $p_x$} \\
& & 16 & 32 & 64 & 128 & 256 & 512   \\ \hline
\multirow{13}{1em}{\begin{sideways} procs w.r.t. time $p_t$\end{sideways}}
& 4      & 12\,158.70 & 7\,000.47 & 4\,381.72 & 2\,925.62 & 2\,132.41 & 2\,107.73 \\
& 8      & 6\,721.02  & 3\,911.30 & 2\,437.63 & 1\,654.01 & 1\,219.39 & 1\,170.38 \\
& 16     & 4\,016.91  & 3\,522.05 & 1\,459.71 & 1\,007.60 & 728.52    & 703.79    \\
& 32     & 2\,203.77  & 1\,946.12 & 822.15    & 565.93    & 421.31    & 418.68    \\
& 64     & 1\,212.84  & 9\,04.27  & 429.03    & 304.47    & 238.31    & 245.17    \\
& 128    & 667.20     & 468.11    & 220.43    & 162.00    & 130.97    & 135.74    \\
& 256    & 341.14     & 232.08    & 117.75    & 85.76     & 70.97     & 74.36     \\
& 512    & 172.21     & 119.18    & 59.54     & 44.76     & 37.58     &           \\
& 1\,024 & 84.94      & 60.44     & 30.12     & 23.07     &           &           \\
& 2\,048 & 44.92      & 31.73     & 15.96     &           &           &           \\
& 4\,096 & 27.94      & 21.29     &           &           &           &	          \\	
\hline
\end{tabular}
\end{adjustbox}
\caption{Space-time parallel test of Section~\ref{sec:numexp4}. Computational times in seconds. The total number of processors is $p_tp_x$, with up to $131\,072$ processors in total. The linear system involves $2\,249\,728\, 000$ unknowns.}
\label{tab:spacetimescaling}
\end{table}

Table \ref{tab:spacetimescaling} presents the total computational times in seconds for these different processor configurations.
We observe almost perfect scaling with respect to the time-parallelization, whereas the space-parallelization stagnates beyond $128$ processors. The best result was obtained for $p_x = 64$ and $p_t = 2048$, i.e.\ with $131\,072$ processors overall, resulting in a time to solution of $15.96$ seconds.
Concerning the spatial parallelism, we used the default library settings in \emph{hypre} without further tuning. 
A possible reason for the observed behaviour in terms of the spatial parallelism is that the spatial solvers adapt the smoothers inside AMG to the number of cores, with more effective sequential smoothers available on lower core counts. This leads to an additional reduction of the total number of iterations for small $p_x$.

\section*{Conclusion}
We have presented an original method for the time-parallel solution of parabolic problems.
The inf-sup theory of the discrete problem provided the motivation lead to the derivation of an original symmetric saddle-point reformulation of the problem that remains stable with respect to the natural norms of the problem.
The saddle-point system can then be solved efficiently by iterative methods, such as the inexact Uzawa method considered here.
We proposed an easily implementable non-intrusive time-parallel preconditioner for the Schur complement of the system, and proved robust spectral bounds with respect to key discretization parameters.
The robustness, efficiency and parallel performance of the proposed method were shown both theoretically and experimentally in large scale parallel computations.

More broadly, the approach for preconditioning nonsymmetric systems as pursued in this work, namely the construction of preconditioners based on the inf-sup stability of the problem, should not be strictly limited to the parabolic PDEs considered here, since inf-sup stability is equivalent to well-posedness for general linear operators. However, it is natural to expect that the practical details for developing efficient solvers will be specific to each problem and their discretizations.

\begin{small}

\end{small}

\end{document}